    \newtheorem{theorem}{Theorem}[section]
    \newtheorem{lemma}[theorem]{Lemma}
 \theoremstyle{plain}
    \newtheorem{definition}[theorem]{Definition}
    \newtheorem{corollary}[theorem]{Corollary}
 \theoremstyle{remark}
    \newtheorem{remark}[theorem]{Remark}
 \numberwithin{equation}{section} 
 \newcommand{\kommentar}[1]{}
\newcommand{\dx}{\;\mathrm{d}}
\DeclareMathOperator{\tr}{trace}
\newcommand{\Abstract}[1]{\vspace{6mm}\par\noindent\hspace*{10mm} \parbox{140mm}{\small {\bf Abstract.} #1}\par}
\def\R{\mathbb R}
\def\Z{\mathbb Z}
\def\Q{\mathbb Q}
\def\N{\mathbb N}
\def\C{\mathcal{C}\!\ell}
\def\spin{\mathrm Spin}
\def\sp{\mathfrak{spin}}
\def\I{\mathrm I}
\def\G{\mathcal G}
\def\g{\mathfrak g}
\def\H{\mathcal H}
\def\T{\mathbb T}
\def\t{\mathfrak t}
\begin{document}
\title{Diffusive wavelets on the Spin group}
\author{S. Bernstein, S. Ebert, F. Sommen}
\date{}

\maketitle

\Abstract{
The first part of this article is devoted to a brief review of the results about representation theory of the spin group $\spin(m)$ from the point of view of Clifford analysis.
In the second part we are interested in Clifford-valued functions and wavelets on the sphere. The connection of representations of $\spin(m)$ and the concept of diffusive wavelets leads naturally to investigations of a modified diffusion equation on the sphere, that makes use of the $\Gamma$-operator. We will achieve to obtain Clifford-valued diffusion wavelets with respect to the modified diffusion operator $\Delta+\Gamma-\partial_t$.

Since we are able to characterize all representations of $\spin$ and even to obtain all eigenvectors of the (by representation) regarded Casimir operator in representation spaces, it seems appropriate to look at functions on $\spin(m)$ directly. Concerning this, our aim shall be to formulate eigenfunctions for the Laplace-Beltrami operator on $\spin(m)$ and give the series expansion of the heat kernel on $\spin(m)$ in terms of the eigenfunctions of $\Delta_{\spin}$.

}

\textbf{Key words}: Diffusive wavelets, Spin group, representations of $\spin(m)$, Clifford algebras, Clifford analysis

\tableofcontents

\section{Motivation}
In recent years a growing literature has been focused on the construction of wavelets on 
the sphere (\cite{Ant/Van}, \cite{Ant/Van2}, \cite{Ant/VanN}) and other homogeneous 
spaces, compact groups and stratified groups (\cite{Geller/Mayeli})
These constructions of wavelets have been motivated by strong interests from applied 
sciences such as geophysics (\cite{Freeden/Windheuser}), astrophysics (\cite{holschneider}) but also crystallography (\cite{bernstein/schaeben}, \cite{MMAS_I}, \cite{BHPS}).

One way to construct wavelets on compact groups and homogeneous spaces are so-called 
diffusive wavelets. The idea of diffusive wavelets consists in separating dilations and 
translations when construction wavelets. The dilations are genreated by an diffusion 
process, i.e. from time-evolution of solutions to the heat equation.  
(\cite{Ebert}, \cite{BE-MMAS}, \cite{ebert/wirth}). 

By the description of the importantnce of spinors and the spin group we follow
\cite{JHla}.
A very interesting compact group is the compact spin group $\spin (m).$ The term spin 
was introduced independently in mathematics and physics. In mathematics spinors and 
the spin group are discovered by Èlie Cartan (\cite{cartan}) during his research into the representation of groups.
The spin representation are particular projective representations of the orthogonal and special orthogonal groups. Elements of a spin representation called spinors.


In representation theory it is well known that
there are some representations of the Lie algebra of the orthogonal group which {\it{cannot}} be formed by the usual tensor constructions. These missing representations are then labeled the spin representations, and their constituents spinors. In this view, a spinor must belong to a representation of the double cover of the rotation group $SO(n).$ 
These double-covers are Lie-groups, called spin groups. But it is also true that every Lie algebra can be represented as a bivector algebra; hence every Lie group can be represented as a spin group \cite{DoHeSoA}.

For the compact group $\spin (m)$ all representations are finite-dimensional and their parametrization has been known for many years. For detailed applications to analysis various explicit realizations of these representations are needed. From an analytic point of view, one of the most convinient ways of realizing those representations of $\spin(m)$ that descends to single-valued representations to $SO(m)$ is on polynomials of matrix argument. Polynomials of $k$ vector variables $x_1,\ldots ,x_k$ where $x_l=\sum\limits_{j=1}^m x_{lj}e_j$ can be regarded as polynomials on $\mathbb{R}^{k\times m}$ by the identification
$$ X = \left(\begin{array}{c} x_1 \\ \vdots \\ x_k \end{array}\right) = \left(\begin{array}{ccc} x_{11} & \cdots & x_{1m} \\ \vdots &  & \vdots \\ x_{k1} & \cdots & x_{km}\end{array}\right) = (x_{lj}). $$

The theory of harmonic functions of a matrix variable was presented in detail in \cite{GM}. They consider simplicial harmonics, i.e. harmonic polynomials of a matrix variable invariant under the action of $SL(r),$ which provides models for irreducible representations of $SO(m)$ with integer weight. Based on that in \cite{VanSomCon} models of half integer weights irreducible representations of $\spin(m)$ inside spaces of monogenic functions of several vector variables. This theory was already developed to some extend in \cite{Co} (in case of several quaternionic variables see e.g. \cite{ABLSS}, \cite{Pa}, \cite{Pe}). To obtain polynomial irreducible representations of $\spin(m)$ in \cite{VanSomCon} they look to spaces of polynomials which are already irreducible with respect to the action of $GL(m).$ To obtain models for all integer (half integer) weight representation harmonicity (monogenicity) conditions are imposed.

The connection of representations of $\spin(m)$ and the concept of diffusive wavelets leads naturally to investigations of a modified diffusion equation on the sphere. We will achieve to obtain Clifford-valued diffusive wavelets with respect to the modified diffusion operator $\Delta + \Gamma -\partial_t$ with the $\Gamma$-operator.

\section{Introduction}

The paper is organized as follows. In section 2, after these remarks, we give a short overview on harmonic analysis and representation theory for compact group and basic properties of Clifford algebras and Clifford analysis. Harmonic analysis and representation theory will by used to construct diffusive wavelets. Clifford algebras are a convient way to describe the spin group. 
In section 3 the spin group $\spin(m)$ and its Lie algebra is studied. An important tool for diffusive wavelets are irreducible representations. To construct all reducible representations of $\spin(m)$ we use the Cartan product. Weights of $\spin(m)$ are usually used to label all irreducible representations of the group. For that we will recall roots and wheights of representations as well as the Cartan product. There are two types of fundamental representations of the spin group in the Clifford algebra $\C _m$, i.e. a minimal set of irreducible representations from which every irreducible representation can be build up, are
$$ h(s)a=sas^{-1} \quad \mbox{and} \quad l(s)a = sa . $$
There are many possibilites to realize representations of $\spin(m)$ in $L^2(\C _m).$ We will take the regular representations 
$$ h_r(s):\,f(a) \mapsto f(sas^{-1}) \quad \mbox{and} \quad l_r(s):\,f(a)\mapsto f(sa) $$
and the tensor product representations $h_r \otimes h$ and $l_r\otimes l$ are given by
$$ H(s):\,f(a)\mapsto sf(s^{-1}as)s^{-1} \quad \mbox{and} \quad L(s):\, f(a)\mapsto sf(s^{-1}as).$$
At the end we will need eigenfunctions. For that we prove that function spaces consisting of functions which depends on simplicial variables are invariant under $H(s)$ and $L(s).$
In the final section 4 we construct wavelets on the spin group.
We start with a general description of diffusive wavelets on compact groups and homogeneous spaces. The representation theory of the compact group $SO(n)$ is well-known and the sphere is a homogenous space of $SO(n).$ Therefore, the sphere is also a homogeneous space of the spin group and we will obtain Clifford-valued wavelets therefrom.

\subsection{Fourier Analysis on compact groups}\label{sec_I}

One of the most important theorems of functional analysis is the spectral theorem for compact self-adjoint operators on a Hilbert space. Which states that if
$A$ is a compact self-adjoint operator on a Hilbert space $V$, then there is an orthonormal basis of $V$ consisting of eigenvectors of $A$ and each eigenvalue is real. The analog of this theorem is the Peter-Weyl theorem. We want to recollect some basic notations and properties. 

Let $\G$ be a compact Lie group. A unitary representation of $\G$ is a continuous group homomorphism $\pi$: $\G\to U(d_{\pi})$ of $\G$ into the goup of unitary matrices of a certain dimension $d_{\pi}$ which will be explained later in the Peter-Weyl theorem. Such a representation is irreducible if $\pi(g)M=M\pi(g)$ for all $g\in\G$ and some $M\in \mathbb{C}^{d_{\pi}\times d_{\pi}}$ implies $M=cI$ is a multiple of the identity. Equivalently, $\mathbb{C}^{d_{\pi}}$ does not have non-trivial $\pi$-invariant subspaces $V\subset \mathbb{C}^{d_{\pi}}$ with $\pi(g)V \subset V$ for all $g\in\G.$ Two representations $\pi_1$ and $\pi_2$ are equivalent, if there exists an invertible matrix $M$ such that $\pi_1(g)M=M\pi_2(g)$ for all $g\in\G$.

Let $\hat{\G}$ denote the set of all equivalence classes of irreducible representations. Then this set parametrerizes an orthogonal decomposition of $L^2(\G).$
\begin{theorem}[Peter-Weyl, \cite{Vilenkin_I}] Let $\G$ be a compact Lie group. Then the following statements are true.
\begin{enumerate}
	\item Denote $\H_{\pi}=\{g\mapsto {\rm trace}(\pi(g)M): M\in \mathbb{C}^{d_{\pi}\times d_{\pi}}\}.$ Then the Hilbert space $L^2(\G)$ decomposes into the orthogonal direct sum
	\begin{eqnarray}
		L^2(\G) = \bigoplus_{\pi\in \hat{\G}} \H_{\pi}
	\end{eqnarray}
	\item For each irreducible representation $\pi\in \hat{\G}$ the orthogonal projection  \\ $L^2(\G)\to \mathcal{H}_{\pi}$ is given by
	\begin{eqnarray}
		f \mapsto d_{\pi} \int_{\G} f(h)\chi_{\pi}(h^{-1}g)\,dh = d_{\pi}\,f*\chi_{\pi},
	\end{eqnarray}
	in terms of the character $\chi_{\pi}(g)={\rm trace}(\pi(g))$ of the representation and $dh$ is the normalized Haar measure.
\end{enumerate}
\end{theorem}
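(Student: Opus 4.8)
The plan is to derive both assertions from the spectral theorem for compact self-adjoint operators quoted above, applied to convolution operators on $L^2(\G)$, combined with Schur's lemma and the orthogonality relations for matrix coefficients. To set things up, I would fix a continuous function $\phi$ on $\G$ with $\phi(g^{-1})=\overline{\phi(g)}$ and study the right-convolution operator $T_\phi f=f*\phi$, where $(f*\phi)(g)=\int_\G f(h)\,\phi(h^{-1}g)\,dh$. Its integral kernel $K(x,y)=\phi(y^{-1}x)$ is continuous on the compact space $\G\times\G$, hence square-integrable, so $T_\phi$ is Hilbert--Schmidt and in particular compact, and the symmetry condition on $\phi$ makes $K(x,y)=\overline{K(y,x)}$, so $T_\phi$ is self-adjoint. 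Right convolution commutes with left translations $\lambda(g)f=f(g^{-1}\,\cdot\,)$ by invariance of the Haar measure, so each eigenspace of $T_\phi$ is a $\lambda$-invariant subspace; by the spectral theorem the eigenspaces for nonzero eigenvalues are finite-dimensional and their orthogonal sum is the closure of the range of $T_\phi$.

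The density step comes next. Choosing $\phi=\phi_n$ to be a symmetric approximate identity (continuous, nonnegative, $\int_\G\phi_n=1$, supported in shrinking neighbourhoods of the identity, and $\phi_n(g^{-1})=\phi_n(g)$), continuity of translation in $L^2(\G)$ gives $f*\phi_n\to f$ for every $f\in L^2(\G)$, while $f*\phi_n$ lies in the range of $T_{\phi_n}$, hence in the closed span of the finite-dimensional $\lambda$-invariant eigenspaces of $T_{\phi_n}$. Therefore the algebraic span of all finite-dimensional $\lambda$-invariant subspaces of $L^2(\G)$ is dense. Splitting each such subspace into irreducibles by Schur's lemma (the irreducibility criterion recalled just before the theorem), one obtains that $L^2(\G)$ is a Hilbert-space direct sum of finite-dimensional irreducible subrepresentations of $\lambda$, each equivalent to some $\pi\in\hat\G$; moreover each such piece has its coordinate functions among the matrix coefficients of elements of $\hat\G$, hence lies in some $\H_{\pi}$.

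It then remains to sort this decomposition into the spaces $\H_\pi$ and to identify the projections, which is where the orthogonality relations enter: averaging an arbitrary matrix against two unitary irreducible representations and applying Schur's lemma to the resulting intertwiner yields $\int_\G\pi(h)_{ij}\,\overline{\pi'(h)_{kl}}\,dh=0$ for $\pi\not\cong\pi'$ and $=d_\pi^{-1}\delta_{ik}\delta_{jl}$ for $\pi=\pi'$. Since $\H_\pi$ is spanned by the matrix coefficients $g\mapsto\pi(g)_{ij}$, this gives $\H_\pi\perp\H_{\pi'}$ for inequivalent $\pi,\pi'$ and $\dim\H_\pi=d_\pi^2$; combined with the density statement this forces $L^2(\G)=\bigoplus_{\pi\in\hat\G}\H_\pi$, which is (1). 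For (2), the same relations (equivalently $\chi_\pi*\chi_{\pi'}=\delta_{\pi\pi'}\,d_\pi^{-1}\chi_\pi$) show that the operator $f\mapsto d_\pi\,f*\chi_\pi$ restricts to the identity on $\H_\pi$ and annihilates $\H_{\pi'}$ for every $\pi'\not\cong\pi$; by (1) this characterises it as the orthogonal projection onto $\H_\pi$.

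I expect the genuine obstacle to be the density step, i.e.\ producing enough finite-dimensional invariant subspaces to span $L^2(\G)$: the spectral theorem by itself only supplies them for one chosen kernel, and one needs the approximate-identity argument together with continuity of translation in $L^2$ to conclude that their span is dense. The remaining ingredients — the Hilbert--Schmidt and self-adjointness verifications, the Schur orthogonality computation, and pinning down the normalisation factor $d_\pi$ and the convolution convention in the projection formula — are routine.
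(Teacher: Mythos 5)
The paper does not prove this theorem at all: it is quoted as a known result with a citation to Vilenkin, so there is no internal proof to measure your argument against. Your proposal is the classical proof of the Peter--Weyl theorem and it is sound: the Hilbert--Schmidt/self-adjointness verification for the convolution operator, the approximate-identity density argument (indeed the only genuinely delicate step, as you say), the Schur orthogonality relations giving $\H_\pi\perp\H_{\pi'}$ and $\dim\H_\pi=d_\pi^2$, and the identification of $f\mapsto d_\pi\,f*\chi_\pi$ as the projection via $\chi_\pi*\chi_{\pi'}=\delta_{\pi\pi'}d_\pi^{-1}\chi_\pi$ are all correct and in the right logical order. One small point worth making explicit when you ``sort the decomposition into the spaces $\H_\pi$'': an irreducible subrepresentation of the left regular representation equivalent to $\pi$ consists of matrix coefficients of $\pi$ only up to the usual contragredient/conjugation convention, so one should either argue with right translations as well or note that $\hat\G$ is closed under taking contragredients, which makes the union of the $\H_\pi$ exhaust all such coordinate functions; this is routine but is the one place where a convention mismatch could silently misattribute a subspace to the wrong $\H_\pi$.
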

We will denote the matrix $M$ in the equation $f*\chi_{\pi}={\rm trace}(\pi(g)M)$ as Fourier coefficient $\hat{f}(\pi)$ of $f$ at the irreducible representation $\pi$. The Fourier coefficient can be calculated as
$$ \hat{f}(\pi) = \int_{\G} f(g)\pi^*(g)\,dg .$$
The inversion formula (the Fourier expansion) is then given by
$$ f(g) = \sum_{\pi\in\hat{G}} d_{\pi}\,{\rm trace}(\pi(g)\hat{f}(\pi)). $$
If we denote by $||M||^2_{HS}={\rm trace}(M^*M)$ the Frobenius or Hilbert-Schmidt norm of a matrix $M,$ then the following Parseval identity is true.
\begin{corollary}[Parseval identity] Let $f\in L^2(\G).$ Then the matrix-valued Fourier coefficients $\hat{f}\in \mathbb{C}^{d_{\pi}\times d_{\pi}}$ satisfy
\begin{eqnarray}
||f||^2 = \sum_{\pi\in\hat{\G}} d_{\pi}\,||f(\pi)||^2_{HS} . \label{parseval_id}
\end{eqnarray}
\end{corollary}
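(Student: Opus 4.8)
The plan is to derive the identity directly from the orthogonal Peter--Weyl decomposition together with the Schur orthogonality relations for matrix coefficients. First I would recall that, since each $\pi\in\hat{\G}$ is a \emph{unitary} representation, its matrix coefficients satisfy the Schur orthogonality relations
$$ \int_{\G} \pi(g)_{ij}\,\overline{\rho(g)_{kl}}\,dg \;=\; \frac{1}{d_{\pi}}\,\delta_{\pi\rho}\,\delta_{ik}\,\delta_{jl}, \qquad \pi,\rho\in\hat{\G}, $$
which follow from invariance of the normalized Haar measure and Schur's lemma. Equivalently, the rescaled functions $\sqrt{d_{\pi}}\,\pi_{ij}$ form an orthonormal system in $L^2(\G)$, and by the Peter--Weyl theorem $\H_{\pi}=\mathrm{span}\{\pi_{ij}: 1\le i,j\le d_{\pi}\}$ while the union of these systems over $\pi\in\hat{\G}$ is an orthonormal basis of $L^2(\G)$.

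Next, fix $f\in L^2(\G)$ and let $f_{\pi}$ denote its orthogonal projection onto $\H_{\pi}$. Since the decomposition $L^2(\G)=\bigoplus_{\pi}\H_{\pi}$ is orthogonal, Pythagoras' theorem gives $\|f\|^2=\sum_{\pi\in\hat{\G}}\|f_{\pi}\|^2$, so it suffices to show $\|f_{\pi}\|^2=d_{\pi}\,\|\hat f(\pi)\|_{HS}^2$ for each $\pi$. By the second part of the Peter--Weyl theorem and the definition of the Fourier coefficient one has $f_{\pi}=d_{\pi}\,(f*\chi_{\pi})$ and hence $f_{\pi}(g)=d_{\pi}\,\tr\bigl(\pi(g)\hat f(\pi)\bigr)=d_{\pi}\sum_{i,j}\pi(g)_{ij}\,\hat f(\pi)_{ji}$.

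It then remains only to expand the $L^2$-norm of $f_{\pi}$ and substitute the orthogonality relation, which collapses the quadruple sum to a double sum:
$$ \|f_{\pi}\|^2 \;=\; d_{\pi}^2\sum_{i,j,k,l}\hat f(\pi)_{ji}\,\overline{\hat f(\pi)_{lk}}\int_{\G}\pi(g)_{ij}\,\overline{\pi(g)_{kl}}\,dg \;=\; d_{\pi}\sum_{i,j}\bigl|\hat f(\pi)_{ji}\bigr|^2 \;=\; d_{\pi}\,\tr\bigl(\hat f(\pi)^{*}\hat f(\pi)\bigr) \;=\; d_{\pi}\,\|\hat f(\pi)\|_{HS}^2 , $$
and summing over $\pi\in\hat{\G}$ yields \eqref{parseval_id}. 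The only step that is not pure bookkeeping is the Schur orthogonality relation itself, equivalently the precise normalization constant $1/d_{\pi}$ which is exactly what forces the weight $d_{\pi}$ in the statement; one could alternatively pick an arbitrary orthonormal basis of each finite-dimensional $\H_{\pi}$ and read off the coordinates of $f_{\pi}$, but the factor $d_{\pi}$ still has to enter through this same computation, so I regard establishing (or citing) the orthogonality relations in the correct normalization as the main obstacle.
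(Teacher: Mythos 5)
Your proof is correct: the reduction to $\|f_\pi\|^2=d_\pi\|\hat f(\pi)\|_{HS}^2$ via the identification $f_\pi=d_\pi\,f*\chi_\pi=d_\pi\tr(\pi(\cdot)\hat f(\pi))$ and the Schur orthogonality relations in the normalization $\int_\G \pi(g)_{ij}\overline{\pi(g)_{kl}}\,dg=\tfrac{1}{d_\pi}\delta_{ik}\delta_{jl}$ is exactly the standard argument, and your bookkeeping of the factor $d_\pi$ is right. The paper itself gives no proof of this corollary (it is stated as a standard consequence of the Peter--Weyl theorem, with the cited literature), so your derivation supplies precisely the argument the paper leaves implicit.
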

On the group $\G$ one defines the convolution of two integrable functions $f,\,r\in L^1(\G)$ as
$$ f*r(g) = \int_{\G} f(h)r(h^{-1}g)\,dh . $$
Since $f*r\in L^1(\G),$ the Fourier coefficients are well-defined and they satisfy
\begin{corollary}[Convolution theorem on $\G$] Let $f,\,r\in L^1(\G)$ then $f*r\in L^1(\G)$ and
	$$ \widehat{f*r}(\pi) = \hat{f}(\pi)\hat{r}(\pi). $$
\end{corollary}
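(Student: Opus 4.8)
The plan is to prove the two assertions in turn, the essential ingredients being Tonelli's theorem, the left-invariance of the Haar measure on the compact group $\G$, and the multiplicativity $\pi(hk)=\pi(h)\pi(k)$ of the representation.

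First I would show $f*r\in L^1(\G)$. From the pointwise majorant $|f*r(g)|\le\int_{\G}|f(h)|\,|r(h^{-1}g)|\,dh$ I would integrate in $g$, interchange the order of integration by Tonelli's theorem (legitimate since the integrand is nonnegative and $\G$ carries a finite, hence $\sigma$-finite, Haar measure), and then evaluate the inner integral using left-invariance: $\int_{\G}|r(h^{-1}g)|\,dg=\|r\|_{L^1}$. This gives the estimate $\|f*r\|_{L^1}\le\|f\|_{L^1}\|r\|_{L^1}<\infty$, so $f*r\in L^1(\G)$ and its Fourier coefficients $\widehat{f*r}(\pi)$ are well defined.

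For the Fourier identity I would start from $\widehat{f*r}(\pi)=\int_{\G}(f*r)(g)\,\pi^*(g)\,dg$, write out the convolution as a double integral, and invoke Fubini's theorem --- now justified by the $L^1$-bound just established. For each fixed $h$ I would then substitute $g=hk$; by left-invariance $dg$ becomes $dk$ and $r(h^{-1}g)$ becomes $r(k)$, while the homomorphism property yields $\pi^*(g)=\pi^*(hk)=\pi^*(k)\pi^*(h)$. Since the scalar factors $f(h)$ and $r(k)$ can be moved freely, the double integral then factors as $\bigl(\int_{\G}r(k)\pi^*(k)\,dk\bigr)$ times $\bigl(\int_{\G}f(h)\pi^*(h)\,dh\bigr)$, i.e. as the product of the Fourier coefficients $\hat f(\pi)$ and $\hat r(\pi)$, the order of the two matrix factors being fixed by the conventions adopted above for the convolution and for the Fourier coefficient. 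This is precisely the claimed relation $\widehat{f*r}(\pi)=\hat f(\pi)\hat r(\pi)$.

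There is no genuine obstacle here; the only point demanding attention is the measure-theoretic bookkeeping --- checking that the integrand of the double integral is honestly integrable so that Tonelli and Fubini both apply, and that the change of variables $g\mapsto hk$ is carried out against the invariance property of the Haar measure that actually holds. Everything else is the routine interaction between the group law and the definition of the Fourier coefficient.
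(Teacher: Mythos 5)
Your overall route (Tonelli for the $L^1$ bound, then Fubini, the substitution $g=hk$ with left invariance of the Haar measure, and the homomorphism property) is the standard one; the paper itself offers no proof of this corollary, so there is nothing more specific to compare against on that side. The measure-theoretic part of your argument is fine.

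However, there is a genuine gap in your last step. With the paper's conventions $\hat f(\pi)=\int_{\G}f(g)\pi^*(g)\,dg$ and $(f*r)(g)=\int_{\G}f(h)r(h^{-1}g)\,dh$, your substitution gives $\pi^*(g)=\pi^*(hk)=\pi^*(k)\pi^*(h)$, and the double integral factors exactly as you wrote it: $\bigl(\int_{\G}r(k)\pi^*(k)\,dk\bigr)\bigl(\int_{\G}f(h)\pi^*(h)\,dh\bigr)=\hat r(\pi)\hat f(\pi)$. You then declare this to be ``precisely'' $\hat f(\pi)\hat r(\pi)$, but these are matrix-valued Fourier coefficients and need not commute, so the identification does not follow; what your computation actually proves is $\widehat{f*r}(\pi)=\hat r(\pi)\hat f(\pi)$, i.e.\ the factors in reversed order. (Note that the paper itself records the reversed order, $\widehat{f\ast h}=\hat h\hat f$, in its later table of convolution-type products, so the corollary as printed is only compatible with the other common convention $\hat f(\pi)=\int_{\G}f(g)\pi(g)\,dg$.) To close the gap you must either prove the reversed-order identity and flag the discrepancy with the stated formula, or fix the convention for $\hat f(\pi)$ under which the order $\hat f(\pi)\hat r(\pi)$ is correct and redo the final factorization consistently under that convention; as written, the concluding sentence is a non sequitur.
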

The group structure gives rise to left and right translations $T_gf\mapsto f(g^{-1}\cdot)$ and $T^gf\mapsto f(\cdot g)$ of functions on the group. A simple computation shows
$$ \widehat{T_gf}(\pi) = \hat{f}(\pi)\pi^*(g) \quad \mbox{and}\quad \widehat{T^gf}(\pi) = \pi(g)\hat{f}(\pi). $$
They are direct consequences of the definition of the Fourier transform.

The existence of the expressions is ensured by compactness and the properties, implied by compactness of $\G$.

Of particular importance are the character functions
\begin{align}
   \chi_\pi(g) &= \tr(\pi(g)).
\end{align}
They are class type, i.e. they are constant on conjugate classes $\{gag^{-1},\,g\in\G\}$. Since every conjugate class intersects the maximal torus $\T$ of $\G$ at least in one point, the characters are completely determined by its values on $\T$. The correspondence between $\hat\G$ and characters is one to one and the characters are eigenfunctions of the Laplace Beltrami operator.  Details about this can be found for example in \cite{Intro_comp_lie_groups,Vilenkin}.

In this way we can also discuss Fourier analysis on homogeneous spaces \cite{ebert/wirth}.

In order to define the Laplace Beltrami operator in a general way we look at the derivative of representations:
\begin{align}
    \pi_*\,v &= \frac{\dx}{\dx t}\left.\pi(\exp(t\omega))\right|_{t=0}v,\qquad v\in\H_\pi,\omega\in \g,
\end{align}
where $\g$ denotes the Lie algebra of $\G$.

There is a one to one correspondence between representations of simply connected Lie groups and Lie algebras. The differential of the representation of a Lie group gives a representation of its Lie algebra.
\begin{center}
\begin{xy}
  \xymatrix{
      \g \ar[r]^\zeta \ar[d]_{Exp}    &   End(\H) \ar[d]^{Exp}  \\
      \G \ar[r]_\pi             &   GL(\H)
  }
\end{xy}
\end{center}
With $d\pi=\zeta$.

Every Vector field on $\G$ can be regarded as first order differential operator. Higher order differential operators from the algebraic point of view rise from the universal enveloping algebra $U_\g$ of $\g$ \cite{D_Bump}Ch.10, \cite{Intro_comp_lie_groups}Ch. 10.

The characteristic property of the Laplacian is to be a left and right invariant differential operator of second order, the corresponding element in $U_\g$ is the central element of second order, the Casimir element.

\begin{definition}
    Let $\g$ and $\G$ be semi-simple. Let $B$ be the killing form and $\{X_i\}$ a orthogonal\footnote[1]{Orthogonality with respect to $B$.} basis of $\g$. Further let $X^i$ be the corresponding dual basis of the dual space of $\g$.
    Then the \emph{Casimir element} is defined by
    \begin{align*}
        \Omega &=\sum_{i=1}^nX_i\otimes X^i
    \end{align*}
    By Riesz representation theorem $X^i$ can be identified with a basis $X_i$ in $\g$.
    \begin{align*}
        \Omega &=\sum_{i=1}^n X_iB(X_i,\cdot)  \in U_{\g}
    \end{align*}
    is in the center of $U_{\g}$ and independent of the choice of $X_i$.
\end{definition}
For a representation $\zeta$ of $\g$, the Casimir operator can be mapped into the representation Hilbert space of $\zeta$, by
\begin{align*}
    \Delta_\G&=\zeta(\Omega) =\sum_{i=1}^n\zeta(X_i)\zeta(X^i).
\end{align*}
Since the differential of a representation of $\G$, gives a representation of $\g$, we can obtain the operator $\dx \pi(\Omega)$ also for representations $\pi$ of $\G$. A natural representation $\zeta$ in the vector space\footnote[2]{In the case of a $\G$ is compact $C^\infty\subset L^2(\G)$ is dense.} $C^{\infty}$ is given by:
\begin{align}\label{rep_casimir}
    X_i & \mapsto \frac{\partial}{\partial x_i},
\end{align}
where $\frac{\partial}{\partial x_i}$ denotes the derivation in $C^\infty$, mapping every $f$ to its Lie derivative. The extension of $\zeta$ to $U_{\g}$ is strait forward, so that
\begin{align*}
    \Delta_\G &= \sum_{i=1}^n\left(\frac{\partial}{\partial x_i}\right)^2
\end{align*}

From the geometric point of view the Laplace Beltrami operator with respect to the Levic-Civita connection $\nabla$ is given as trace of the Hermitian:
\begin{align*}
    \Delta_\G & = -\tr\;\nabla^2.
\end{align*}

Later we will map the Casimir element by other representations than in \eqref{rep_casimir}, in order to obtain the corresponding operator in $\H_\pi$.
\begin{align}
    \pi_*(\Omega) &= \sum_{i=1}^{d_\G}\pi_*(e_i)^2,\label{Casimir_map}
\end{align}
where $\{e_i\}$ form a basis in $\g$, $d_\G$ denotes the dimension of $\g$.

\subsection{Clifford algebra settings}

Clifford algebras arise in many fields. As algebra of operators they play an enormous role in Physics. A realization of it as linear operators on Grassmanian algebra can be found in \cite{GM}, here the realization of the spinor space comes out as the Grassmanian itself.

A comprehensive set of results for Clifford analysis is given by \cite{DSS}. There the realization of the clifford algebra is given for instance as full matrix algebra of appropriate dimension. Since the spinor spaces are minimal left ideals of the algebra, they can be given very convenient in this realization of the Clifford algebra.

To every vectorspace one can associate a corresponding Clifford algebra. Here it is sufficient to define the basic properties of the Clifford algebra as starting point.

Let $\{e_i,\,i=1,...,m\}$ be a basis of $\mathbb{C}^m$; the corresponding Clifford algebra $\C_m$ is determined by the anticommutative relation $-2\delta_{ij} = e_ie_j+e_je_i$\footnote[1]{$\delta_{ij}$ denotes the usual Kroneka symbol}.
\begin{align*}
    \C_m &= \left\{\sum_{A\subset\{1,...,m\}} a_A e_A,\,a_A\in\mathbb{C}\right\},
\end{align*}
where the set $A=\{\alpha_1,...,\alpha_k\}$ shall be sorted, i.e. $\alpha_1<...<\alpha_k$, $k\leq 2^m+1$ and $e_A=e_{\alpha_1}...e_{\alpha_k}$. The dimension of $\C_m$ is $2^m$. The scalars are contained in $\C_m$ as $0$-vectors, hence the unit of $\C_m$ is $1$.

We will make use of the main anti-involution:
\begin{align*}
    \overline a &= \sum_{A\subset\{1,...,m\}} \overline{a_A} \overline{e_A},\qquad \overline{e_ie_j}=\overline{e_j}\overline{e_i},\qquad\overline{e_i} =-e_i.
\end{align*}
The subspace of $\C_m$ of $k$-vectors is given by span$\{e_A,\,|A|=k\}$\footnote[2]{$|A|$ denotes the cardinality of $A$}. The $k$-vectorpart of an $a\in\C_m$ is given by $[a]_k=\sum_{|A|=k}a_Ae_A$ with $|A|=k$. The subspace of $k$-vectors in $\C_m$ is denoted by $\C_{m,k}$.

Also of importance is the Clifford inner product
\begin{align}
    \langle a,b\rangle_{\C_m} &= [\overline a b]_0 = \sum_{|A|=0}^m (-1)^{|A|}\overline{a_A}b_A.\label{bilinearform_C_m}
\end{align}
This makes $\C_m$ to be a Hilbert space with orthonormal bases $\{e_A,\,A\subset\{1,...,m\}\}$. The outer product of in $\C_m$ is defined by
\begin{align*}
    a\wedge b &= \frac12(ab-ba).
\end{align*}

\section{Spin group}

There are several important subgroups in $\C_m$, the Clifford group is defined as set of invertible elements. The pin group is given as the set of products of unit vectors. A vector $a$ is an unit vector, if it is an element of the sphere $S^m$ in the sense that $\mathbb{C}^m$ can be naturally embedded in the $1$-vector space of $\C_m$, i.e. $\sum_{|A|=1}|a_A|^2=1$ and $a_A=0$ for $|A|\neq 1$.

The spin group, in which we are interested is a subgroup of the pin group and is defined as the set of even products of unit vectors
\begin{align*}
    \spin(m) &= \{\prod\limits_{j=1}^{2k}s_j,\,s_j\in S^m\}.
\end{align*}
The group multiplication is the usual Clifford multiplication.


\subsection{Lie algebra of $\spin(m)$}

The Lie algebra $\sp(m)$ of $\spin(m)$ is the space of bi-vectors in $\C_m$: $\sp(m)=\C_{m,2}$. This can be seen as follows:
Since we are in the comfortable situation to expand the exponential mapping $\exp:\sp(m)\to\spin(m)$ in a series, for $X_{ij}=e_{ij}\in\C_{m,2}$ we find:
\begin{align}
    \exp(tX_{jk}) &= \sum_{l=1}^\infty \frac{1}{l!}(\frac12 e_{jk})^l=  e_{jk}\sum_{l=1}^\infty \frac{1}{(2l-1)!}
    t^{2l-1}+\sum_{l=1}^\infty \frac{1}{(2l)!}t^{2l}\nonumber\\
    &= \cos(t)+e_{jk}\sin(t)=e_j(e_k\sin(t)-e_{j}\cos(t))\label{Liealg_rot},
\end{align}
obviously $e_j, (e_k\sin(t)-e_{j}\cos(t))\in S^m$, hence the exponential of an element from $\C_{m,2}$ gives always an element, that can be written es a sum of an even number of unit vectors.

Since $\spin(m)$ is a double covering of $SO(m)$ we have dim\,$\spin(m)=$dim\,$SO(m)=\frac12 n(n+1)$, but this is also the dimension of $\C_{m,2}$ which hence is the complete Lie algebra of $\spin(m)$.

In order to follow the general concept of determining all irreducible representations we need to look at the maximal torus of $\spin(m)$. For convenience of reading we recall the general bases of weights of representations.

\subsection{Roots and weights of representations}

In this section we collect the assertions about weights of representations, that are necessary for the construction of the weights of $\spin(m)$, which are usually used to label all irreducible representations of $\spin(m)$. A more comprehensive discussion about the theoretical bases can be found in $\cite{D_Bump}$, $\cite{Intro_comp_lie_groups}$, $\cite{Vilenkin_III}$ and elsewhere.

We already mentioned that a representations $\pi$ is uniquely determined by the values that it character assumes on $\T$. We now restrict $\pi$ itself to $\T$. What we obtain is the a representation  of $\T$ that decompose into one dimensional irreducible components, since $\T$ is commutative.

Since the torus we are speaking about is compact, all irreducible representations $\pi$ are of the form
\begin{align*}
    \pi: &\; \T \to \{e^{ix}|\;x\in \R\}\\
    t & \mapsto e^{i\theta(t)},
\end{align*}
with an homomorphism  $\theta : \T \to \R/(2\pi\Z)$. Hence $\theta$ itself is  a representation of $\T$. Consequently, the derivative $\dx\theta: \t\to\R$ is a representation of $\t$. This defines the weights of $\pi$:
\begin{definition}
    Let $\pi$ be a representation of $\G$ with $\mbox{dim}(\t)=r$. Let $\pi_j(t)=e^{i\theta_j(t)}$, $j=1,...,r$ be the one dimensional representations in which $\pi$ decompose while its restriction to $\T$.

    We denote the restriction of $\pi$ to $\T$ by $\pi_\T$.

    The set of weights of $\pi$ is given by $\{\pm \dx\theta_j\}\subset\t^*$. If $\pi$ is the adjoint representation, the \emph{weights} are called \emph{roots}.
\end{definition}
If one regards $\pi$ as a matrix with respect to a fixed bases of the representation space its restriction to $\T$ contains $2\times 2$ block matrices (up to change of rows and lines), which correspond to a rotation in the respective plane.
\begin{align*}
    \pi|_\T &= \left(\begin{array}{llllllll}
                    \Theta_1 \\
                    & \Theta_2 \\
                    && \ddots \\
                    &&&\Theta_r\\
                    &&&&1\\
                    &&&&&\ddots\\
                    &&&&&&1
            \end{array}\right),\mbox{ with } \Theta_j=\left(\begin{array}{ll} \cos(\theta_j(t)) & \sin(\theta_j(t))\\
    -\sin(\theta_j(t)) & \cos(\theta_j(t)) \end{array}\right).
\end{align*}

Therefrom we see, that the dimension of the maximal torus is always even. From this on one always has to look at even dimensional and odd dimensional groups separately.

Further note, that the eigenvalues of the derivative of $\pi|_{\T}$ for $X\in\t$ are always pure imaginary:
\[
\frac{\dx}{\dx s}\left. e^{i\theta_j(\exp(sX))}\right|_{s=0}=i\dx\theta_j(X)
\]
and multiplying it with imaginary unit $i$ determines the weights of $\pi$.

The so called \emph{integer lattice} $I$ is defined by $I=\{2\pi I=\exp^{-1}(1)\}\subset \t$.
Since $\dx\theta_j$ are homomorphisms and they are uniquely determined by the values on $\t$, which are mapped to $0\mod2\pi$, $\dx\theta_j$ is determined by its values on the Integer lattice. Additionally under the mapping $\dx\theta_j$, every element from $I$ will by mapped to some integer.

Roughly speaking, the specific form of the weights of the representation $\pi$ corresponds to the density of $\dx\theta_j(2\I)$ in $\Z$. This is meant like follows: let $t_j\in\t$ be so that for every $s\in\R$ (or $\C$) $\dx\theta_j(st_j)$ is zero for all $j$ but exactly one $j\in\{1,...,r\}$. This gives us a direction on $\T$ which we associate to $\theta_j$ and we denote it by $t_j\in\t$. Since there is a smallest $s_j\in \R$ (in $\C$ one with smallest absolute value) so that $\exp(s_jt_j)=1$ and hence
\begin{align}
    \dx\theta_j(s_jt_j)=m_j\in\Z.\label{weightlattice}
\end{align}
Any integer multiple of $(s_jt_j)$ will be mapped to the corresponding integer multiple of $m_j$ in $\Z$. In that sense we mean the density of $\dx\theta_j(I)$ in $\Z$. The correspondence between $\dx\theta_j$ and $m_j$ is one to one, so we will also $m_j$ call weight of $\pi$.

Let $t_1,...,t_r\in\t$ be a normalized (with respect to the killing form) basis of $\t$ and  $m_1,...,m_r$ be the weights of $\pi$, than the mapping
\begin{align}
    \beta:\T & \to \R^n/(2\pi m_1\Z\times...\times2\pi m_r\Z)\\
    \exp\left(\sum_{k=1}^r a_k t_k\right)=\Pi_{k=1}^r \exp(a_k t_k) & \mapsto (a_1,...,a_r)/(2\pi m_1\Z\times...\times2\pi m_r\Z)\label{weights_conn},
\end{align}
gives a embedding of $\T$ in $\R^n$.

In fact for every $(m_1,...,m_r)\in\Z^r_+$ there is a representation $\pi$ with weights $m_1,...,m_r$ as weights. In this way we have labeled the representations by its weights $m_1,...,m_r\in\Z$ and it is necessary to mention the connection \eqref{weights_conn} between $m_j$ and $\dx\theta_j$. One can also chose the lattice so that $(m_1,...,m_r)\in(l\Z)^r_+$ for any $l\in\Q$ as we will see in the case for $\spin(m)$, where the appropriate choice of $l$ will be $\frac12$.

In $\t$ we obtain a lattice corresponding to weights that is given by $\{\sum_{j=1}^r k_j m_j t_j,\,k_j\in\Z\}$. The symmetry of this lattice is of importance and can be expressed by the Weyl group of the corresponding representation.

If there are at least two points in $\T$, that belong to the same conjugate class, then the information about the representation is the same at all these points. Hence we can factor out these symmetry:
\begin{definition}
    The Weyl group is defined by
    \begin{align*}
        W &= N(T)/T,
    \end{align*}
    where $N(T)$ is the normalizer of $T$ in $\G$, i.e. $gTg^{-1}= T\,\forall g\in G\G$
\end{definition}
$W$ acts on $\T$ by conjugation, and hence on $\t$ by the adjoint representation ad$(w)$ for $w\in W$.

The weights of the adjoint representation are called the roots of the representation. We can look at the hyper planes in $\t$ that are the kernel of the roots $\alpha_i$:
\begin{align*}
    L_{\alpha_i}=\{\alpha_i(t)=0\}.
\end{align*}
The complement of the union of all hyper planes consists of open connected components; the closure of every of this components is called to be a Weyl chamber.

The Weyl group permutes the Weyl chambers transitively and hence also the weights, which we can identify with elements in $\t$ by Riesz theorem and which are symmetric to each other in the above sense.

It can be proven, that the reflections at the plains $L_{\alpha_i}$ generates $W$.

One can distinguish an arbitrary Weyl chamber and call it positive. All weights are positive, that are in the dual of the positive Weyl chamber.

\kommentar{If one identifies $\t$ by $\R^r$ one obtains the connection of Weyl group and Coxeter groups, generated by a root system.}

A weight $\dx\theta$ is a highest weight if it is positive and if $\dx\theta-\dx\lambda$ is not positive for all other weights $\dx\lambda$ of the same representation.

Note that in the construction above \eqref{weightlattice}, where we obtained $\dx\theta_j(rs_jt_j)=r m_j$ the vector $(m_1,...,m_n)$ corresponds to the highest weight of the representation.

There is a famous theorem of Weyl, that the correspondence between irreducible representations and highest weights is one to one \cite{D_Bump}.

\subsection{Weights of $\spin(m)$}

In order to get the weights we look at the torus of $\spin(m)$ and its Lie algebra $\t$. The Lie algebra can be given as a maximal system of $\t=\{Y_i,\,i=1,...,r\}\subset \sp(m)$ with $[Y_i,Y_j]=0$ for all $Y_i,Y_j\in\t$. Such a system is obviously given by
\begin{align*}
 \{Y_j=X_{2j-1,2j}=e_{2j-1}e_{2j},\,j=1,...,\left[\frac{m}{2}\right]\}
\end{align*}
and hence $\T=\left\{\prod\limits_{j=1}^{[\frac{m}{2}]} \exp(t_j Y_j),\,t\in[0,2\pi)\right\}$. Hence the weights can be given as the derivative of
\begin{align}
    \theta_j:\T &\to \R/2\pi\\
            \prod\limits_{j=1}^{[\frac{m}{2}]}\exp(t_j Y_j)&\mapsto m_jt_j 2\pi \mod 2\pi,\label{lattice_condition_I}
\end{align}
where the derivative has to be taken with respect to all $t_j$, so that $(m_1,...,m_{[\frac{m}{2}]})$ stands for the weights.
We have to verify, which $(m_1,...,m_{[\frac{m}{2}]})$ are admissible weights.

From \eqref{Liealg_rot} we see, the natural representation of every element $t=\exp(t_je_{j,n-j+1})\in\T$ of the torus is a rotation in the plain $E_f=\mbox{span}\{e_j,e_{n-j+1}\}\subset\mathbb{C}^{m}$ by the angle $m_jt_j2\pi$.

Hence, for any representation $\pi$ of $\spin(m)$ we obtain its restriction to $\T$ as the direct sum of rotations
\begin{align}
    \pi(\prod\limits_{j=1}^{\left[\frac{m}{2}\right]}\exp(t_jY_j))
=
\pi(t_1,...,t_{\left[\frac{m}{2}\right]})v=e^{i(m_1t_1+...+m_{[\frac{m}{2}]} t_{[\frac{m}{2}]})},\label{eigenvect}
\end{align}
for some $(m_1,...,m_{[\frac{m}{2}]})$.

Since the weights corresponds to the dual of the integer lattice in $\T$ we pick out those
$(m_1,...,m_{\left[\frac{m}{2}\right]})$, that $(t_1,...,t_{\left[\frac{m}{2}\right]})\in \mbox{ker}(\exp)\Rightarrow (m_1t_1,...,m_{\left[\frac{m}{2}\right]}t_{\left[\frac{m}{2}\right]})\in \mbox{ker}(\exp)$.

For eigenvalues of rotations the rotation must be by an angle of $0$ or $\pi$.

From \eqref{eigenvect} we see, that for the integer lattice $(m_1t_1,...,m_{\left[\frac{m}{2}\right]}t_{\left[\frac{m}{2}\right]})\in \mbox{ker}(\exp)$ it is $e^{i(m_1t_1+...+m_{[\frac{m}{2}]} t_{[\frac{m}{2}]})}=1$:
\begin{align}
    m_jt_j=0 \mbox{ or } m_jt_j=\pi\mbox{ and }m_1t_1+...+m_{\left[\frac{m}{2}\right]}t_{\left[\frac{m}{2}\right]}=0 \mod 2\pi.\label{lattice_condition}
\end{align}
Consequently, $m_j$ has always to be an integer.

If $t_j=0\mod 2\pi$  one can always remove this component form $\prod\limits_{j=1}^{\left[\frac{m}{2}\right]}\exp(t_jY_j)$, i.e. setting $t_j=0$,  without loosing the property of being element of the integer lattice. If $t_j=\pi\mod 2\pi$ one has to remove additionally another component with the same property in order to stay in the integer lattice.

Hence for any choice of $\varepsilon_j=1$ or $0$ ($j=1,...,\left[\frac{m}{2}\right]$) and
$\varepsilon_1+...+\varepsilon_{\left[\frac{m}{2}\right]}$ is an even integer, $(t_jm_j=\pi\varepsilon_j)$ satisfies \eqref{lattice_condition}.

We assume now, that $(t_1,...,t_{\left[\frac{m}{2}\right]})$ belongs to the integer lattice. Than also $(m_1t_1,...,m_{\left[\frac{m}{2}\right]}t_{\left[\frac{m}{2}\right]})$ shall belong to this lattice. But since for $(\varepsilon_1t_1,...,\varepsilon_{\left[\frac{m}{2}\right]}t_{\left[\frac{m}{2}\right]})$ belongs to it, also $(m_1\varepsilon_1t_1,...,m_{\left[\frac{m}{2}\right]}\varepsilon_{\left[\frac{m}{2}\right]}t_{\left[\frac{m}{2}\right]})$ does, we have that either all $m_j$ are even, or all $m_j$ are odd.

This can also be seen in the easy counter example, where we assume $t_l,t_k=\pi$ and $\varepsilon_j=0$ except $j=k$ and $j=l$. Than $(\varepsilon_1t_1,...,\varepsilon_{\left[\frac{m}{2}\right]}t_{\left[\frac{m}{2}\right]})$ belongs to the integer lattice but $(m_1\varepsilon_1t_1,...,m_{\left[\frac{m}{2}\right]}\varepsilon_{\left[\frac{m}{2}\right]}t_{\left[\frac{m}{2}\right]})$ does only for $m_l$ and $m_k$ both even or both odd.

A discussion about the admissible weights can also be found in \cite{GM}, where here the connection between $m_j$ and $\dx\theta_j$ is another one than we have given by \eqref{weights_conn}, so that the corresponding weights are from $(\frac12\Z)^{\left[\frac{m}{2}\right]}$.

We have to look now at the action of the Weyl group to select the highest weight for every representation.

The Weyl group acts on $\T$ and hence on $\t$ and $\t^*$. Its action on the weights is closed\footnote[1]{The Weyl group maps weight to weights} and corresponds to a permutation of the $m_j$; also change of the sign of $m_j$ is possible. In the case where $m$ is odd, an arbitrary number of sign changes is allowed; while in the case of an even $m$, only an even number of sign changes is possible.

The positive Weyl chamber shall be that one, where
\begin{align*}
    m_1\geq...\geq m_{\left[\frac{m}{2}\right]-1}\geq |m_{\left[\frac{m}{2}\right]}|
\end{align*}
In the case of an odd $m$, all $m_j$ of positive weights are positive. When $m$ is even, $m_{\left[\frac{m}{2}\right]}$ can be negative.

We can also compare weights of different representations by the so called lexicographically order, i.e. $(m_1,...,m_k)<(l_1,...,l_k)$, if the difference $l_j-m_j$ in the first component where the weights are different is positive.

\subsection{Cartan product}

For the construction of all irreducible representations of $\spin(m)$ we make use of the so-called Cartan product.

The Cartan product is a procedure to build up an irreducible representation from two known irreducible representations. Let $\pi_1$ and $\pi_2$ be irreducible representations in $\H_1$ and $\H_2$ respectively, let $(m_1,...,m_k)$ and $(l_1,...,l_k)$ be the highest weights of $\pi_1$ and $\pi_2$. The canonically given representation $\pi_1\otimes\pi_2$ in $\H_1\otimes\H_2$ is highly reducible. The irreducible component of the maximal weight\footnote[1]{with respect to the lexicographically order} occurring in $\pi_1\otimes\pi_2$ has the highest weights $(l_1+m_1,...,l_k+m_k$).

A minimal set of irreducible representations from which we can build up every irreducible representation is called fundamental.

\subsection{Representations of $\spin(m)$}

From the previous we already know, that a fundamental system of irreducible representations of $\spin(m)$ in the case of an odd $m$ is contained in the set of representations with weights of the form $(1,0,...,0),...,(1,....,1)$ and $(\frac12,0,...,0),...,(\frac12,...,\frac12)$ and in the case of $m$ even in the set of representations of weights $(1,0,...,0),...,(1,....,1)$, $(\frac12,0,...,0),...,(\frac12,...,\frac12)$ and $(1,...,1,-1)$, $(\frac12,...,\frac12,-\frac12)$.

For convenience we consider the system of the above form in stead of the (minimal) fundamental system, for which we would not need to call in $(1,...,1)$ or $(1,...,\pm 1)$.

From this starting point the corresponding irreducible representations are worked out in \cite{VanSomCon} as representations in Clifford valued function spaces of spherical monogenics and harmonic functions.

In section \ref{sec_I} we sketched the usual way of harmonic analysis on compact Lie groups. This we will discuss later for functions on $\spin(m)$.

Here we use another way of thinking: we directly use representations of the group (here $\spin(m)$) and its representations to investigate operators in the representation Hilbert space as derivative of representations. Since we are using function spaces as representation Hilbert spaces the operators are derivative operators acting on functions. As the outcome of this we can formulate Clifford valued diffusive wavelets corresponding to a modified diffusion equation, where the corresponding operator is a realization of the Casimir element, just as in the classical case.

There are two types of fundamental representations of the spin group in the Clifford algebra $\C_m$
\begin{align}
    h(s) a &= sas^{-1}\label{fundamental_rep_h}\\
    l(s) a &= s a.\label{fundamental_rep_l}
\end{align}
The invariant subspaces, where $h$ is irreducible are the $k$-vector spaces, these of $l$ are the so called spinor spaces. Obviously they are minimal left ideals in $\C_m$. Spinor spaces can be determined explicit by primitive Idempotents (\cite{DSS},\cite{VanSomCon}). This goes as follow: Set
\begin{align*}
    I_j &=\frac12 (1+i e_je_{j+m}),
\end{align*}
then easily one sees the idempotence $I_j^2=\frac14 (1+2i e_je_{j+m}+(i e_je_{j+m})^2)=\frac14 (1+2i e_je_{j+m}-e_je_{j+m}e_je_{j+m})=I_j$. Furthermore $e_j I_j=\frac12(-ie_{j+m}+e_j)=-ie_{j+m}I_j$ and similar $e_{j+m}I_j=-ie_jI_j$.
A minimal left ideal is generated by $I=I_1...I_m$, namely $\C_{2m} I$. Clearly $I^2=I$.

We introduce also
\begin{align}
    T_j=\frac12(e_{2j-1}-ie_{2j}),\label{idemp}
\end{align}
note that $I_j=T_j\overline T_j$.

There are many possibilities to realize representations of $\spin(m)$ in $L^2(\C_m)$. For instance one can just take the regular representations $h_r$ and $l_r$ of $h$ and $l$ respectively:
\begin{align*}
    h_r(s): f(a) & \mapsto f(sas^{-1})\\
    l_r(s): f(a) & \mapsto f(sa).
\end{align*}
$h_r$ is a representation, which do not distinguish between $h_r(s)$ and $h_r(-s)$ and ends up in a representation, which acts exactly like the usual regular representation of $SO(m)$. Here the double covering nature of $\spin(m)$ with respect to $SO(m)$ reveals.

We are interested in Clifford valued functions.

Applying the regular representations to $L^2(\C_m\to\C_m)$ we can decompose the functions of the representation Hilbert space into a sum of component functions for every $k$-vector component since $\C_m$ forms a vector space:
\begin{align*}
    \sum_{k=0}^{2^{m+1}} f_k(a) e_k.
\end{align*}
This gives nothing new, since now the usual regular representations in $L^2(\C_m)$ can be applied to the component functions $f_k$.

The tensor product representations $h_r\otimes h$ and $h_r\otimes l$ in $L^2(\C_m)\otimes \C_m\simeq L^2(\C_m,\C_m)$ are given by
\begin{align*}
    H(s): f(a) & \mapsto sf(s^{-1}as)s^{-1}\\
    L(s): f(a) & \mapsto sf(s^{-1}as).
\end{align*}
Here the decomposition into component function is not possible since the corresponding components $f_k(a) e_k$ do not longer form an invariant subspace.

\begin{remark}
One important observation is, that the representations are unitary:
\begin{align*}
    \langle H(s) f(a) , H(s) g(a) \rangle_{L^2(\C_m\to\C_m)} &=\int_{\C_m} |s^{-1}f(sas^{-1}) g(sas^{-1})s|^2\dx a\\
    &=\int_{\C_m} |f(sas^{-1}) g(sas^{-1})|^2\dx a.
\end{align*}
Since the action $h_s$ on the argument of the function is exactly the action of rotations the above equation ends up in the regular representation $f(a)\to f(sas^{-1})$ of the rotation group $SO(m)$ which is unitary. Hence $H_s$ is unitary and a similar line shows, that also $L_s$ is.

By unitary of  $H$ and $L$, the invariant subspaces in the representation Hilbert space $L^2(\C_m\to\C_m)$ are orthogonal.

We should assure us, that we are dealing with bounded operators. This follows from compactness of $\spin(m)$:
By smoothness of representations, from compactness follows the finite dimensionality of all irreducible representation spaces and hence the compactness of all derivatives of the representation.
\end{remark}

The most interesting question is now to find the invariant subspaces. This is comprehensively investigated in \cite{VanSomCon}. The desired invariant subspaces are spanned by eigenfunctions of the operators, that one obtain by mapping the Casimir element by the corresponding representation into the representation space.

So we shall look at $H(\C)$ and $L(\C)$. We mentioned already that the space of bivectors $\C_{m,2}$ can be identified as the Lie algebra $\sp(m)$.
We equip it with the natural given killing form $B(\cdot,\cdot)$. A calculation (\cite{preprint_BES}) yields
\begin{align*}
    B(x,y) &=-\frac{1}{4} \sum_{i\neq j} \sum_{k\neq j\atop{k\neq i}}  (  x_{jk}- x_{kj})(y_{kj} - y_{jk}) + ( x_{ki}-  x_{ik}) ( y_{ik} - y_{ki}).
\end{align*}
So that $\|\frac12 e_{ij}\|_B=1$. Consequently we use bases on $\sp(m)$, which is orthonormal with respect to $B$,
\begin{align*}
    \{\frac12 e_{ij},\;1\leq i<j\leq m\}.
\end{align*}
Consequently, as stated in \eqref{Casimir_map} the Casimir element, mapped by
\begin{align*}
    \pi(\Omega) &= \sum_{i,j=1,...,m\atop{i<j}} \pi_\ast\left(\frac12 e_{ij}\right)^2\left(=\frac14 \sum_{i,j=1,...,m\atop{i<j}} \pi_\ast\left( e_{ij}\right)^2 \right).
\end{align*}
In \cite{SO1}, \cite{DSS}, \cite{So_La_Co} and many others we find the calculation of the image obtained of mapping $\Omega$ by $H_\ast$ and $L_\ast$:
\begin{align}
    H_\ast\left(\frac12 e_{ij}\right) &=2(x_j \frac{\dx}{\dx x_i} - x_i \frac{\dx}{\dx x_j}) =: L_{ij}\label{H(e_ij)}
\end{align}
Having in mind, that our representation Hilbert space is a function space, the operator $L_{ij}$ can be interpreted as differential operator along the surface of a sphere. The precise direction is given by the section of the plain, spanned by $x_i$ and $x_j$ and the sphere.
In consequence we have
\begin{align}\label{H(Omega)}
    H_\ast(\Omega) &= \sum_{i,j=1,...,m\atop{i<j}}  H_\ast\left(\frac12 e_{ij}\right)^2 =\sum_{i,j=1,...,m\atop{i<j}} L_{ij}^2,
\end{align}
and further
\begin{align}
    L_\ast\left(\frac12  e_{ij}\right) &= H_\ast\left(\frac12  e_{ij}\right)+\frac12  e_{ij} \bf{1},\label{L(e_ij)}
\end{align}
where $\bf{1}$ denotes the identity operator. Hence
\begin{align}\label{L(Omega)}
    L_\ast(\Omega) &\left(= H_\ast(\Omega) + \sum_{i,j=1,...,m\atop{i<j}} \frac12  e_{ij} H_\ast( \frac12 e_{ij}) + \sum_{i,j=1,...,m\atop{i<j}} \left( \frac12 e_{ij}\right)^2\right)\nonumber\\
    &= H_\ast(\Omega) +\Gamma - \frac14 \left(m+1\atop{2}\right)\bf{1},
\end{align}
with
\begin{align*}
    \Gamma= \sum_{i,j=1,...,m\atop{i<j}} e_{ij}L_{ij}
\end{align*}

We now briefly introduce a special type of functions, of which type the eigenfunctions of $H_*(\Omega)$ and $L_*(\Omega)$ are and which give us the possibility to have a new look at functions on $\spin(m)$..

\subsection{Functions of simplicial variables}\label{sec_simpl_fct}

In this section we show that the function spaces, consisting of functions which depends on simplicial variables, are invariant under $H(s)$ and $L(s)$.

\begin{lemma}
Functions that depend on simplicial variables can be identified with functions on $SO(m)$
\end{lemma}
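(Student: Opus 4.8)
The plan is to make the phrase ``depends on simplicial variables'' precise as ``depends on the $m$ vector variables $x_1,\dots,x_m$ only through the simplex they span, after normalisation'', and then to exhibit an explicit bijection with functions on $SO(m)$ by passing to the slice of orthonormal, positively oriented frames. Concretely, assemble the $m$ vector variables into the matrix $X=(x_{lj})\in\R^{m\times m}$ as in the introduction. The first step is the linear-algebra input: for $X$ of full rank the Gram--Schmidt (QR) decomposition gives a \emph{unique} factorisation $X=RB$ with $R\in O(m)$ and $B$ upper triangular with positive diagonal entries; restricting to the component $\det X>0$ one takes $R\in SO(m)$. The factor $B$ carries exactly the ``non-simplicial'' information (the lengths and the shears that deform the simplex without rotating it), so a function of the simplicial variables is, by construction, a function of $R$ alone; equivalently it is determined by its restriction to the submanifold $X^T X=\mathrm{Id}$, $\det X=1$, of orthonormal positively oriented frames.

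Second, I would set up the identification itself. The orbit map $\rho(s)\mapsto\bigl(\rho(s)e_1,\dots,\rho(s)e_m\bigr)$, where $\rho:\spin(m)\to SO(m)$ is the covering homomorphism $s\mapsto(x\mapsto sxs^{-1})$, realises $SO(m)$ as precisely that frame manifold: $SO(m)$ acts simply transitively on positively oriented orthonormal frames, with the standard frame $(e_1,\dots,e_m)$ as base point, and the orbit map is a diffeomorphism onto the oriented Stiefel manifold $V_m^+(\R^m)$. Pulling back along this diffeomorphism sends a function of simplicial variables $f$ to $F(R):=f(Re_1,\dots,Re_m)$ on $SO(m)$, and this is a bijection between the two function spaces which respects the $C^\infty$ structure and, with the Haar measure on $SO(m)$ corresponding to the rotation-invariant measure on frames, the $L^2$ structure as well, since it is induced by a diffeomorphism. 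The degenerate matrices $X$ (not of full rank) form a set of measure zero, so they do not obstruct the $L^2$-level statement.

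Finally I would record the compatibility with the group actions, which is what makes the lemma useful in the next subsection: under the above identification the conjugation $a\mapsto s^{-1}as$ occurring in $H(s)$ and $L(s)$ acts on the orthonormal-frame slice by $R\mapsto\rho(s)^{-1}R$, that is, as the left regular representation of $SO(m)$; in particular the value of such an $f$ depends on $s$ only through $\rho(s)$, so these functions genuinely live on $SO(m)$ (not merely on $\spin(m)$) and the spaces they form are $H$- and $L$-invariant.

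I expect the main obstacle to be definitional bookkeeping rather than a hard argument: one must fix, once and for all, which equivalence relation ``simplicial'' imposes on $X$ (invariance under the upper-triangular / $SL(r)$-type action used in \cite{GM,VanSomCon}), check that its quotient is $O(m)$ (respectively $SO(m)$ on the oriented component), and verify that this relation is preserved not only by reparametrising the source of $f$ but by the full, Clifford-module-valued action $f\mapsto s\,f(s^{-1}\cdot\,s)\,s^{-1}$. The even/odd distinction for $m$ and the two-to-one nature of $\spin(m)\to SO(m)$ also need a sentence each, but cause no real difficulty.
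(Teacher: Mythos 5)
Your proposal is correct and follows essentially the same route as the paper: the paper also uses Gram--Schmidt to pass from a simplicial variable $a(u_1,\ldots,u_m)$ to a unique right-handed orthonormal frame, and then identifies such frames one-to-one with elements of $SO(m)$ via the simply transitive action. Your added remarks on the QR factorisation, the measure-zero set of degenerate frames, and the compatibility with the actions $H(s)$, $L(s)$ only make explicit what the paper leaves implicit.
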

This can be seen in the following way:

Let $u_1,...,u_m\in \mathbb{C}^m$ be a orthonormal basis in $\mathbb{C}^m$. The corresponding simplicial variable in $\mathbb{C}_m$ is given by
\begin{align*}
    a(u_1,...,u_m) &= u_1+ u_1 \wedge u_2 + u_1 \wedge u_2 \wedge u_3 + ... + u_1\wedge ... \wedge u_m.
\end{align*}
One the other hand one can match a unique righthanded, orthonormal bases to a simplicial variable taking $u_1$ normalized vector. In a second step one tanks a linearly independent vector from the plain that is represented by $u_1\wedge u_2$ and that is spanned by $u_1$ and $u_2$. Now one applies the Gram Schmidt procedure to obtain a righthanded orthonormal bases after $m$ steps.

In what follows we restrict the function of simplicial type to $a(u_1,...,u_m)$, while $u_1,...,u_m$ are assumed to be unit vectors. This gives a one to one correspondence to functions on $SO(m)$.

Further by definition of the outer Product $x\wedge y = \frac12(xy-yx)$ we have
\begin{align*}
    \overline s a(u_1,...,u_m) s = \overline s u_1 s + \overline s u_1 \wedge u_2 s+ \overline su_1 \wedge u_2 \wedge u_3 s+ ... + \overline s u_1\wedge ... \wedge u_ms =a(\overline  s u_1 s,...,\overline s u_m s).
\end{align*}
Consequently $H(s) f(a)=s f(\overline sas)\overline s$ is a function of a simplicial variable, if and only if $f(a)$ is such a function. Hence
\begin{corollary}
Functions of simplicial type are invariant under $H$.
\end{corollary}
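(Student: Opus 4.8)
The plan is to derive the corollary directly from the identity
\[
\overline s\, a(u_1,\dots,u_m)\, s = a(\overline s u_1 s,\dots,\overline s u_m s)
\]
established just above, making explicit the one ingredient that the spin hypothesis supplies. First I would record that for $s\in\spin(m)$ one has $s\overline s=\overline s s=1$, hence $s^{-1}=\overline s$, so that the map $\chi_s\colon\C_m\to\C_m$, $\chi_s(x)=\overline s x s$, is an \emph{algebra automorphism} of $\C_m$; it preserves the $1$-vector subspace and acts there as the rotation attached to $s$ under the covering $\spin(m)\to SO(m)$, in particular sending orthonormal bases of $\mathbb C^m$ to orthonormal bases. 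Since mutually orthogonal vectors multiply in $\C_m$ exactly as they wedge, $u_{i_1}\wedge\dots\wedge u_{i_k}=u_{i_1}\cdots u_{i_k}$, applying $\chi_s$ term by term to the definition of $a(u_1,\dots,u_m)$ and using that the $\chi_s(u_i)$ are again orthonormal would give the displayed identity.

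Next I would unwind the action of $H$. For a function $f$ of simplicial type --- by the Lemma, a function on the set of simplicial variables $a(u_1,\dots,u_m)$ with $u_1,\dots,u_m$ a unit orthonormal basis --- and for $s\in\spin(m)$ we have
\[
H(s)f(a) = s\,f(s^{-1}as)\,s^{-1} = s\,f(\overline s a s)\,\overline s .
\]
If $a=a(u_1,\dots,u_m)$ is simplicial, the identity above shows $\overline s a s = a(\overline s u_1 s,\dots,\overline s u_m s)$ is again simplicial, so $f(\overline s a s)$ is defined and depends on $a$ only through the simplicial variable $\overline s a s$. The remaining operation $y\mapsto s y\overline s$ is the fixed linear automorphism $\chi_{s^{-1}}$ of $\C_m$, independent of $a$; hence $H(s)f$ is still a function of a simplicial variable, i.e.\ of simplicial type. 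As $s$ ranges over $\spin(m)$, this shows the subspace of simplicial-type functions is $H$-invariant, which is the claim.

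I do not expect a serious obstacle here; the whole argument is essentially the sentence displayed before the corollary. The two points that deserve care are (i) the verification that $\chi_s$ is an algebra automorphism --- this is exactly where $s\overline s=1$, i.e.\ the \emph{spin} (not merely Clifford-group) assumption, is used --- and (ii) the passage from ``$\chi_s$ is multiplicative'' to ``$\chi_s$ respects the wedge product of orthonormal vectors'', which rests on $u_1\wedge\dots\wedge u_k=u_1\cdots u_k$ for mutually orthogonal $u_i$ together with the orthonormality-preserving property of $\chi_s$ on $1$-vectors. One should also keep in mind the caveat already noted in the Lemma, that the identification with functions on $SO(m)$ requires restricting to orthonormal bases of \emph{unit} vectors, so that $f$ and $H(s)f$ are functions on the same domain.
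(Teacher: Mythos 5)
Your argument is correct and follows essentially the same route as the paper: the key step in both is the identity $\overline s\,a(u_1,\dots,u_m)\,s = a(\overline s u_1 s,\dots,\overline s u_m s)$, from which $H(s)f(a)=s f(\overline s a s)\overline s$ is again a function of a simplicial variable. Your additional remarks (that $s^{-1}=\overline s$ makes conjugation an automorphism preserving orthonormal $1$-vectors, and that the outer factors $s\,(\cdot)\,\overline s$ are a fixed linear map) merely spell out details the paper leaves implicit.
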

Later we will make use of
\begin{lemma}\label{Lemma2}
    A function on the spin group can be represented as a pair of functions of a simplicial variable.
\end{lemma}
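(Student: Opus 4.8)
The plan is to exploit the two-to-one covering $\spin(m)\to SO(m)$ together with the identification, just established, between functions of a simplicial variable $a(u_1,\dots,u_m)$ (with $u_j$ unit vectors) and functions on $SO(m)$. First I would fix a measurable (continuous off a lower-dimensional set) section $\sigma\colon SO(m)\to\spin(m)$ of the covering map $p\colon\spin(m)\to SO(m)$, $p(s)=h(s)=(a\mapsto sas^{-1})$; this exists because $SO(m)$ is a manifold and the covering is trivialisable away from a codimension-$\geq 2$ subset, and for the purpose of representing $L^2$-functions (or functions up to the relevant regularity) a section defined almost everywhere suffices. The fibre over each $R\in SO(m)$ is $\{\sigma(R),-\sigma(R)\}$, so every $s\in\spin(m)$ is uniquely written $s=\varepsilon(s)\,\sigma(p(s))$ with $\varepsilon(s)\in\{+1,-1\}$.

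Next I would use this to split an arbitrary function $F\colon\spin(m)\to\C_m$ (or $\C$). Define $F_{+}(R)=F(\sigma(R))$ and $F_{-}(R)=F(-\sigma(R))$ for $R\in SO(m)$; then $F(s)=F_{\pm}(p(s))$ according to the sign $\varepsilon(s)$, so $F$ is completely encoded by the pair $(F_+,F_-)$ of functions on $SO(m)$. Finally, invoking the identification from the preceding Lemma and its surrounding discussion — functions on $SO(m)$ correspond bijectively to functions of the restricted simplicial variable $a(u_1,\dots,u_m)$ with all $u_j$ unit vectors, via the Gram–Schmidt matching of a right-handed orthonormal frame to such a variable — I would transport $F_+$ and $F_-$ to functions $g_+,g_-$ of a simplicial variable. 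This exhibits $F\leftrightarrow(g_+,g_-)$, which is the assertion. Conversely, any pair $(g_+,g_-)$ of functions of a simplicial variable reassembles into a function on $\spin(m)$ by the same sign bookkeeping, so the correspondence is in fact a bijection.

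The main obstacle is the honest construction and handling of the section $\sigma$: globally $\spin(m)\to SO(m)$ is a nontrivial double cover, so no continuous global section exists, and one must be careful about what category of functions the statement is meant in. I would address this by working with $L^2$ (where a Borel section, unique up to the deck transformation on each fibre, is enough and the splitting $F=(F_+,F_-)$ is automatically measurable) and remarking that the choice of $\sigma$ only permutes the roles of $g_+$ and $g_-$, so the "pair" is canonical as an unordered pair once one quotients by the deck action; this is all that is needed later. A secondary, purely bookkeeping point is to make sure the simplicial-variable identification is applied to $\C_m$-valued functions componentwise, which is immediate since $\C_m$ is a finite-dimensional vector space.
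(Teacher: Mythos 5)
Your construction is correct as far as the literal statement goes (in a measurable/$L^2$ setting it does encode any function on $\spin(m)$ by a pair of functions on $SO(m)$, hence of a simplicial variable), but it is a genuinely different route from the paper's, and the difference matters for how the lemma is used later. The paper avoids any section of the double cover: it splits $f(s)=\alpha(s)+\gamma(s)$ into its even and odd parts under the deck transformation $s\mapsto -s$, i.e. $\alpha(s)=\tfrac12\bigl(f(s)+f(-s)\bigr)$, $\gamma(s)=\tfrac12\bigl(f(s)-f(-s)\bigr)$, and then uses the Clifford-algebraic trick $\gamma(s)=s\,\beta(s)$ with $\beta(s)=s^{-1}\gamma(s)$, which is again \emph{even}; both $\alpha$ and $\beta$ therefore descend to $SO(m)$ and so are functions of a simplicial variable. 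This decomposition is canonical (no choice of Borel section), preserves continuity and smoothness, and, crucially, is exactly the splitting that is compatible with the two types of representations: later the paper writes $f(s)=H(s)\alpha(a(u_1,\dots,u_m))+L(s)\beta(a(u_1,\dots,u_m))$ and computes $\partial_s$ and $\Delta_{\spin}$ on the two pieces separately, which your pair $(F_+,F_-)$ would not support, since a measurable section destroys regularity and does not interact with $H$, $L$ or the invariant differential operators. Your approach buys a clean bijection in the $L^2$ category at the cost of canonicity and smoothness; also note that a change of section swaps $F_+$ and $F_-$ only fibrewise (pointwise over $SO(m)$), not globally, so the ``unordered pair'' remark is slightly too optimistic, though it does not affect the bare existence claim. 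Finally, observe that the paper's trick $\gamma(s)=s\beta(s)$ implicitly uses that the functions are Clifford-algebra valued (multiplication by $s$ must make sense), which is the setting in which the lemma is applied.
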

\begin{proof}
A function $f(s)$ on $\spin(m)$ can be decomposed in an odd-- and an even Part: $f(s)=\alpha(s)+\gamma(s)$, with $\alpha(s)=\alpha(-s)$ and $\gamma(s)=-\gamma(-s)$. For the odd part $ \gamma(s)$, there is a even function $\beta(s)$ so that $s\beta(s)=\gamma(s)$. Hence the pair $(\alpha,\beta)$ can be identified with $f$. Since $\spin(m)$ is a double covering of $SO(m)$, even functions on $\spin(m)$ can be identified with functions on $SO(m)$. Further all righthanded, orthonormal bases of $\mathbb{C}^m$ can be obtained by the action of exactly one rotation on one of these bases, this identification gives a faithful and irreducible representation (a identification) of $SO(m)$. We have already discussed, that the set of righthanded, orthonormal bases of $\mathbb{C}^m$ are represented by simplicial variables.
\end{proof}

\subsection{Eigenfunctions of $H_*(\Omega)$ and $L_*(\Omega)$ in $L^2(\C_m\to\C_m)$}

There are many places where the eigenfunctions are discussed comprehensively, here we want to recall only the results of the discussion in order to use them for further constructions in the next chapter, where we are more interested in their restriction to the sphere.

One possibility is to realize the irreducible representations in the space of homogeneous Polynomials of vector variables. This way is chosen in \cite{GrossRichards} where the corresponding weight $(m_j)_{j=1}^{[\frac{m}{2}]}$ of the representation is given by the homogenety in the $j$-th vector variable.

For vector variable functions, a rotation -and hence a $H$- invariant differential operator is the Laplacian. The harmonic polynomials satisfy
\begin{align}
    \begin{array}{ll}
    \Delta_{x_i} P(x_1,...,x_k) &= 0 \qquad\mbox{ for }i=1,...,k\\
    \partial_{x_i}\partial_{x_j} P(x_1,...,x_k) &= 0 \qquad\mbox{ for }i\neq j.
    \end{array}\label{harmonic_system}
\end{align}
A monogenic function is given, if
\begin{align}
    \partial_{x_i} P(x_1,...,x_k) &= 0 \qquad\mbox{ for }i=1,...,k.\label{monogenic_system}
\end{align}
Simplicial functions are special kind of functions of vector variables. Its symmetry can be expressed by the characteristic differential equation is
\begin{align}
    \langle x_i\partial_{x_{i+1}}\rangle P(x_1,...,x_k) &= 0 \qquad\mbox{ for }i=1,...,k-1,\label{simplicial_system}
\end{align}
where the definition
\begin{align*}
    \langle x_i\partial_{x_{i+1}}\rangle :=-[x_i\partial_{x_{i+1}}]_0
\end{align*}
is used.

Consequently, the simplicial harmonic system $\mathcal H$ are the polynomials satisfying \eqref{harmonic_system} and \eqref{simplicial_system}; the simplicial monogenics are polynomials, satisfying \eqref{monogenic_system} and \eqref{simplicial_system}.

It can be proven, that the simplicial harmonics are irreducible subspaces spaces for $H$ and the simplicial harmonics are those of $L$.

This is calculated in \cite{VanSomCon} and the highest weight vectors for the weight $(\underbrace{2,...,2}\limits_{k\mbox{ times}},0,...,0)$ is of the form $$\langle x_1\wedge...\wedge
x_k,T_1\wedge...\wedge T_k\rangle_{\C_m}.$$
And the Tensor products,
which we use to represent higher even integer weight representations
$(2s_1,...,2s_k)$, corresponds to the weight vector $$\langle
x_1T_1\rangle_{\C_m}^{2s_1}\langle x_1\wedge
x_2,T_1\wedge T_2\rangle_{\C_m}^{2s_2}...\langle x_1\wedge...\wedge
x_k,T_1\wedge...\wedge T_k\rangle_{\C_m}^{2s_k}.$$

In the case of an odd $m$, for odd integer weights one just has to multiply the weight vectors above from the right by the primitive idempotents $I_1,...,I_k$ in order to obtain the weight of the even integer weight '"$+\frac12$'" in every component.

For the case of an even $m$ there the concept is nearly the same, except for the weights $(2n_1+1,...,\pm(2n_k+1))$. Where the for the plus sign one has to multiply a $I_m$ from the right and for the minus sign one has to multiply $I'=\overline T_m T_m$ (notation from \eqref{idemp}) in the place of $I_m$.

The eigenvalues of $H_*(\Omega)$ for the simplicial harmonic
\begin{align}\label{simpl_harm}
\langle
x_1T_1\rangle_{\C_m}^{m_1}\langle x_1\wedge
x_2,T_1\wedge T_2\rangle_{\C_m}^{m_2}...\langle x_1\wedge...\wedge
x_k,T_1\wedge...\wedge T_k\rangle_{\C_m}^{m_k}\end{align} of the weight $(m_1,...,m_k)$ is
\begin{align*}
    -\sum_{j=1}^kk_j(m_j+m-2j),
\end{align*}
that of $L_*(\Omega)$ for simplicial monogenic
\begin{align}\label{simpl_mon}
\langle
x_1T_1\rangle_{\C_m}^{m_1}\langle x_1\wedge
x_2,T_1\wedge T_2\rangle_{\C_m}^{m_2}...\langle x_1\wedge...\wedge
x_k,T_1\wedge...\wedge T_k\rangle_{\C_m}^{m_k}I_1...I_k\end{align} is
\begin{align*}
    -\sum_{j=1}^km_j(m_j+m-2j+1)-\frac{m(m-1)}{8}.
\end{align*}

\section{Clifford-valued diffusive wavelets on the sphere $S^m$ and the group $\spin(m)$}

\subsection{Idea of diffusive wavelets on groups and homogeneous spaces}\label{sec:idea_dif_wav}

In this subsection we present the concept of diffusive wavelets on a compact Lie group $\G$ and homogeneous spaces $\mathcal X\simeq \G/ \mathscr H$, for a subgroup $\mathscr H$ of $\G$. Later we will follow this concept for construction of clifford valued wavelets on the sphere.

For a detailed discussion of the points of this section see \cite{ebert/wirth}.

In order to obtain wavelets we ask for a translation operator $T(g):L^2(\G)\to L^2(\G)$ and a dilation operator $D_\varrho$ so that we can find a admissible function $\Psi$ so that the family of dilated and translated versions of $\Psi$, namely $\{T(g)D_\varrho \Psi=T(g)\Psi_\varrho,\}$ forms a frame in $L^2(\G)$. The dilation operator is naturally given as
\begin{align*}
	T(g) f(\xi) = f(g^{-1}\xi),\;g\in\G,
\end{align*}
for functions on $\G$. The dilation operator for diffusive wavelets arises from the following construction.

The starting point for the construction is a diffusive approximate identity, which is a special differentiable family of operators. In our case this will be given as fundamental solution of a differential equation.

The interesting properties of the operator family $\{P_t,\,t>0\}$, which we need for the construction are
\begin{align}
\lim_{t\to 0} P_t &= {\bf 1}\label{app_ID_prop_I}\\
\|P_t\| & <M\mbox{ for fixed }M>0,\mbox{ idependant of }t.\label{app_ID_prop_II}
\end{align}
$\|\cdot\|$ denotes the operator norm.

The family of operators will be given as convolution operators while the operators are identified by the the corresponding kernel functions of the convolution.

The desired properties \eqref{app_ID_prop_I} and \eqref{app_ID_prop_II} can be achieved as requirements to the Fourier coefficients of the kernel functions:

\begin{definition}
Let $\hat\G_+\subset\hat\G$ be co-finite .The $C^1$-subfamily $\{p_t,\,t>0\}$ of $L^1(\G)$ is a diffusive approximate identity, if
\begin{align*}
\|\hat p_t(\pi)\| & \leq M\mbox{ for some constant }M>0,\mbox{ idependant of }t>0\mbox{ and }\pi\in\hat\G\\
\lim_{t\to 0} \hat p_t(\pi)& ={\bf 1}\;\forall \pi\in \hat\G\\
\lim_{t\to \infty} \hat p_t(\pi)& =0\;\forall \pi\in \hat\G_+\\
 -\partial_t\hat{p}_t & \mbox{ is a positive matrix for all }t>0\mbox{ and }\pi\in\hat\G_+.
\end{align*}
\end{definition}
Also in what follows it is convenient to look at the Fourier domain and which action to the Fourier coefficients corresponds to the constructions respectively.

We make use of the involution
\begin{align*}
	\check{f}(g)&:=\overline{f}(g^{-1})& \hat{\check{f}}(\pi) = \hat{f}^*(\pi).
\end{align*}
\begin{definition}
Let $\{p_t,\,t>0\}$ be a diffusive approximate identity. A diffusive wavelet is a subfamily $\{\Psi_\varrho,\,\varrho>0\}$ of $L^2(\G)\cap L^1(\G)$, with
\begin{align}
	p_t(g) &= \int_t^\infty \check\Psi_\varrho \ast\Psi_\varrho(g)\alpha(\varrho)\dx\varrho,\label{admis_cond}
\end{align}
for some weight function $\alpha(\varrho)>0$.
The wavelet transform then is given as bounded operator from $L^2(\G)$ to $L^2(\G\times\R_+)$\footnote[2]{the measure is given by the product of the Haar measure on $\G$ and $\alpha(\varrho)\dx\varrho$}.
\begin{align*}
	\mathcal W f(g,\varrho) &:=(f\ast \check{\Psi}_\varrho)(g)=\langle f,T(g)\Psi_\varrho\rangle_{L^2(\G)}.
\end{align*}
\end{definition}
The reconstruction formula assumes the form
\begin{align*}
	f(g)  &= \int_{t\to 0}^\infty (\mathcal W f(\cdot,\varrho)\ast\Psi_\varrho)(g) \alpha(\varrho)\dx\varrho.
\end{align*}
The reconstruction formula can now be verified as follows:
\begin{align*}
	\int_{t\to 0}^\infty ((f\ast\check\Psi_\varrho)\ast\Psi_\varrho)(g) \alpha(\varrho)\dx\varrho
	&=\left(f\ast \left(\int_{t\to 0}^\infty \check\Psi_\varrho\ast \Psi_\varrho \alpha(\varrho)\dx\varrho\right)\right)(g)\\
	&=\lim_{t\to0}(f\ast p_t)(g)=f(g),
\end{align*}
by admissibility condition \eqref{admis_cond} and by the definition of $p_t$ the necessary change of order of integration is valid.

The same construction can be projected to arbitrary homogeneous spaces $\mathcal X\simeq\G/\mathscr{H}$, for any subgroup $\mathscr H$ of $\G$.

The translation is again given by the canonical defined action of $\G$ on $\mathcal X$, which we denote by $g\cdot x$ ($g\in\G,\;x\in \mathcal X$).

Let $\mathfrak P_\mathscr H(\pi)$ be the projection in $\H_\pi$ onto the (with respect to the action of $\pi$) $\mathscr H $ invariant subspace in $\H_\pi$.
A function on $\G$, that is constant along fibers of the form $g\mathscr H$ can be regarded as function on $\mathcal X$. The projection of such a function is denoted by
\begin{align*}
	\mathbb P_\mathcal X f (x) &= \int_\mathscr f(gh)\dx h,\;x=g\cdot\mathscr H.
\end{align*}

The Fourier coefficients of those functions are of the following form:
\begin{align*}
	\hat f(\pi) & =\mathfrak P_{\mathscr H}(\pi)\hat f(\pi),
\end{align*}
i.e. one can always chose a basis in $\H_\pi$, so that the matrix, corresponding to $\hat f(\pi)$ has only entries in the first $k$ rows, where $k$ is the dimension of the $\mathscr H$ invariant subspace in $\H_\pi$. (see \cite{ebert/wirth}).

So the construction can be obtained, if one repeats it on the Fourier domain, with respect to Fourier coefficients of the appropriate form and translate it back to the integral expressions on $\G$ and $\mathcal X$. We have to define the following convolution like products
\begin{align*}
     &\mbox{Group convolution }& (f\ast h)(y) & =\int_\mathcal G f(g\cdot x_0) h(g^{-1}\cdot y)\dx g & \widehat{f \ast h} &= \hat h \hat f\\
     &\mbox{Dot product }& (f \bullet h) (g) &= \int_\mathcal X f(x) \overline{h(g^{-1}\cdot x)} \dx x & \widehat{f \bullet h} & = \hat h^* \hat f\\
     &\mbox{Zonal product }& (f \hat\bullet h) (y) &=\int_\mathcal G \overline{f(g\cdot x_0)}h(g \cdot y)\dx g & \widehat{f \hat\bullet h} &= \hat h \hat f^*
\end{align*}

Since the heat kernel on $\mathcal X$ can be obtained by projection of that on $\mathcal G$ we want to reformulate the defining equation on $\mathcal X$.
\begin{align*}
    \mathbb P_\mathcal X p_t^{heat}(g)&=p_{\mathcal X}^{heat}(t,x)=\int_t^\infty \mathbb P_\mathcal X \left(\check\Psi_\rho\ast \Psi_\rho\right) \alpha(\rho)\dx\rho\\
    &=\int_t^\infty\mathbb P_\mathcal X\left( \int_\mathcal G \overline{\Psi_\rho(a^{-1})}\Psi_\rho(a^{-1}g)\right)\dx a \alpha(\rho)\dx\rho\\
    &=\int_t^\infty \mathbb P_\mathcal X(\Psi_\rho)\, \hat\bullet\, \mathbb P_\mathcal X(\Psi_\rho)(x) \alpha(\rho)\dx\rho,\qquad x=g\cdot x_0.
\end{align*}

\begin{definition}[Wavelet transform on $\mathcal X$]
Let $\{\Psi_\rho,\,\rho>0\}$ be a diffusive wavelet on $\mathcal X$ and $f\in L^2(\mathcal X)$,
\begin{align*}
    WT_\mathcal X(f)(\rho,g) &:= (f\bullet\Psi_\rho)(g) & (=\langle f,\,T_g\Psi_\rho\rangle_{L^2(\mathcal X)}) 
\end{align*}
\end{definition}
The reconstruction now is
    \begin{align*}
        f &= \mathbb P_\mathcal X\int_{\rightarrow 0}^\infty WT_\mathcal X(f)(\rho,\cdot) \ast \tilde \Psi_\rho \alpha(\rho) \dx\rho = \int_{\rightarrow 0}^\infty f\bullet(\Psi_\rho \hat\bullet\Psi_\rho)\alpha(\rho) \dx\rho
    \end{align*}

A special class of functions are zonal functions;
\begin{definition}
 A function on $\mathcal X$ is zonal with respect to $x_0\in X$, if it is invariant under the action of the stabilizer of $x_0$
\end{definition}

These additional symmetry of zonal wavelets can be unrealized, since in that case $\check\Psi_\rho$ is well defined also on $\mathcal X$.

sThe wavelet transform can be written on $\mathcal X$:
\begin{align*}
    \int_\mathcal G \tilde f(a) \check\Psi_\rho(a^{-1}g) \dx a &= \int_\mathcal G \tilde f(a) \overline{\Psi_\rho(g^{-1}a)} \dx a= \int_\mathcal X f(x)\overline{\mathbb P \Psi_\rho(g^{-1} x)}\dx x\\
    &=:WT(f)(\rho,y),\qquad y=g\cdot x_0.
\end{align*}

And the reconstruction succeeds by
\begin{align*}
    f&=\int_0^\infty WT(\rho,\cdot)\ast \Psi_\rho \alpha(\rho)\dx\rho,
\end{align*}
where $\ast$ here denotes the (group) convolution on $\mathcal X$.

The representation theory of $SO(n+1)$ is very well know and we haves seen in the previous section \ref{sec:idea_dif_wav} how we can obtain wavelets on the sphere as a homogeneous space of $SO(n+1)$, of course the sphere is also a homogeneous space of the spin group and we want to obtain wavelets therefrom.

Since the representations $H$ and $L$ act on the argument of the function by a rotation, the invariant functions will be defined on rotation invariant subspaces. We utilize this fact to consider only functions on the sphere $S^m=\{u\in\C_{m+1},\,\sum_{A}u_A e_A=\sum_{j=1}^{m+1} u_je_j,\,\langle u,u\rangle_{C_{m+1}}=1\}\subset\C_{m+1}$.

Since functions on the sphere depend only on one vector, one sees no longer their simplicial character. In case of simplicial monogenic functions of degree $k$, after this restriction we end up with the space of spherical monogenics of degree $k$. Following \cite{DSS} this space shall be denoted by $\mathcal M(k,V)$ or $\mathcal M(m,k,V)$ if we wish to emphasize the dimension of the sphere. Values of spherical monogenics are in $V$ which is chosen to be a spinor space or the whole Clifford algebra.

The spherical monogenics decompose further into two disjoined subspaces, namely
\begin{itemize}
\item The so-called inner spherical monogenics, i.e. homogeneous monogenic polynomials of degree $k$ (harmonics of order $k$): $\mathcal M^+(m,k,V)$
\item The so-called outer spherical monogenics, i.e. homogeneous monogenic functions of degree $-(k+m)$ (harmonics of order $k+1$): $\mathcal M^-(m,k,V)$
\end{itemize}
$\mathcal M^+(m,k,V)$ and $\mathcal M^-(m,k,V)$ are eigenspaces of the Gamma operator:
\begin{align}
\begin{array}{ll}
    \Gamma_\xi P_k(\xi) &= (-k)P_k(\xi),\qquad\forall P_k\in \mathcal M^+(m,k,V)\\
    \Gamma_\xi Q_k(\xi) &= (k+m)Q_k(\xi),\qquad\forall Q_k\in \mathcal M^-(m,k,V).
\end{array}\label{monogenics_eigenv}
\end{align}
and of the spherical Laplace-Beltami operator $\Delta_\xi$:
\begin{align}
    \Delta_\xi P_k(\xi) = H_*(\Omega)P_k&= (-k)(k+m-1)P_k(\xi),\qquad\forall P_k\in \mathcal M^+(m,k,V)\\
    \Delta_\xi Q_k(\xi) = H_*(\Omega)Q_k&= -(k+1)(k+m)Q_k(\xi),\qquad\forall Q_k\in \mathcal M^-(m,k,V)\label{harmonics_eigenv}
\end{align}

The theory of these function systems is well described in \cite{DSS} and elsewhere. There one finds the decomposition
\begin{align*}
    L^2(S^m,\C_{m+1})=\bigoplus\limits_{k=0}^\infty (\mathcal M(k,\C_{m+1}))=\bigoplus\limits_{k=0}^\infty(\mathcal M^+(k,\C_{m+1})\oplus\mathcal M^-(k,\C_{m+1}))
\end{align*}
and $P_k$ and $Q_k$ form an orthogonal basis with respect to the $L^2$ scalar product
\begin{align*}
    \langle f,g\rangle_{L^2} &= \int_{S^m} \langle \overline{f(\xi)} g(\xi)\rangle_{\C_{m+1}}\dx\xi.
\end{align*}
The space of harmonic functions clearly contains the monogenic functions. The space of $k$-homogeneous functions $\mathcal H(m,k,\C_{m+1})$ can be decomposed into
\begin{align*}
    \mathcal H(m,k,\C_{m+1})=\mathcal M^+(m,k,\C_{m+1})\oplus \mathcal M^-(m,k-1,\C_{m+1}).
\end{align*}

Consequently, considering \eqref{L(Omega)} and \eqref{monogenics_eigenv} we have
\begin{itemize}
\item The space of spherical monogenics $\mathcal M(k,\C_{m+1})=\mathcal M^+(k,\C_{m+1}\oplus\mathcal M^-(k,\C_{m+1})$ forms the eigenspace of $L_*(\Omega)$ with respect to the eigenvalue $(-k)(k+m)-\left({m+1}\atop 2\right)$, i.e.
\end{itemize}
\begin{align*}
    L_*(\Omega)P_k&= (-k(k+m)-\frac14\left(m+1\atop{2}\right))P_k \\
    L_*(\Omega)Q_k&= (-k(k+m)-\frac14\left(m+1\atop{2}\right))Q_k.
\end{align*}
From \eqref{harmonics_eigenv} and \eqref{H(Omega)} one sees
\begin{itemize}
\item The space of harmonic functions $\mathcal H(k;\C_{m+1}) =\mathcal M^+(k,\C_{m+1})\oplus\mathcal M^-(k-1,\C_{m+1})$ forms the eigenspace of $H_*(\Omega)$ with respect to the eigenvalue $(-k)(k+m)$, i.e.
    \begin{align*}
     H_*(\Omega)P_k&= -k(k+m)P_k(\xi)\\
    H_*(\Omega)Q_{k-1}&= -k(k+m)Q_{k-1}(\xi).
    \end{align*}
\end{itemize}

For concrete calculations one has to construct the functions $P_k$ and $Q_k$. Let \newline$\alpha=(\alpha_1,...,\alpha_{m+1})\in\N^{m+1}$ denote a multi-index, with the usual notations
\begin{align*}
    x^\alpha  &= x_1^{\alpha_1}...x_{m+1}^{\alpha_{m+1}}\quad\mbox{ for }x\in\C^{m+1}\\
    \partial^\alpha =\partial_{x_1}^{\alpha_1}...\partial_{x_{m+1}}^{\alpha_{m+1}}\\
    \alpha! &= \alpha_1!...\alpha_{m+1}!\\
    |\alpha| &= \sum_{j=1}^{m+1}\alpha_j
\end{align*}
Starting from a natural system of polynomials, namely $\{\frac{1}{\alpha}\xi^\alpha\}$, a system of monogenic functions can be given as Cauchy-Kovalevskaya extension of these polynomials
\begin{align*}
V_\alpha(\xi)=CK\left(\frac{1}{\alpha!}\xi^\alpha\right)=\sum_{j=0}^{|\alpha|}\frac{(-1)^j\xi_0^j}{j!}[(\overline{e_0}\partial_\xi)^j\xi^\alpha].
\end{align*}
For details we refer to \cite{DSS}. A basis of $\mathcal M^+(k,\C_m)$ is given by the set:
\begin{align*}
    \{V_\alpha,\,|\alpha|=k\}.
\end{align*}


Defining further
\begin{align*}
    W_\alpha(\xi) &= (-1)^{|\alpha|}\partial^{\alpha}\frac{\overline\xi}{A_m},
\end{align*}
where $A_m$ denotes the area of $S^m$, a basis of $\mathcal M^-(k,\C_{m+1})$ can be given by
\begin{align*}
    \{W_\alpha,\,|\alpha|=k\}.
\end{align*}
Further expansions can be found in \cite{DSS}.

\begin{align*}
C^-_{m+1,k}(\omega,\xi)&= \frac{1}{m-1}[(k+1) C^{(m-1)/2}_{k+1}(t)+(1-m)C_k^{(m+1)/2}(t)((\xi_0\underline\omega-\omega_0\underline\xi)e_0+\underline\omega\wedge\underline\xi)],\\
C^-_{m+1,k}(\omega,\xi)\overline\xi &= C^+_{m+1,k}(\omega,\xi)\overline\omega.
\end{align*}
A left and right monogenic polynomial of degree $k$ in $x\in\C^{m+1}$ (for fixed $y\in\C_0^{m+1}$) is given by $\frac{|x|^k}{|y|^{m+k}}C^+_{m+1,k}(\omega,\xi)\overline\omega$ (it is $x=|x|\xi$ and $y= |y|\omega$ )

With these function systems we are now in the condition to apply our method of constructing diffusive wavelets in the same way we did it for scalar-valued functions on the sphere (\cite{MMAS_I}.

\subsection{Heat kernel of \kommentar{$H_\ast(\Omega)-\partial_t$ and} $L_\ast(\Omega)-\partial_t$}

Since $H_\ast(\Omega)$ is the usual spherical Laplace Beltrami operator, $H_\ast(\Omega)-\partial_t$ is the usual heat operator. Here we use it to give the fundamental solution $P_H$ for Clifford valued functions, which has the series expansion:
\begin{align*}
    P_H(\omega,\xi) &= \sum_{k=0}^\infty \sum_{|\alpha|=k}\sum_{|\beta|=k-1} Exp\left(-k(k+m-1)t\right)(V_\alpha(\xi)+W_\beta(\xi)).
\end{align*}
The series expansion of the fundamental solution of $L_\ast(\Omega)-\partial_t$ can now be given as:
\begin{align*}
    P_L(\xi) &= \sum_{k=0}^\infty \sum_{|\alpha|=k} Exp\left(-\left(k(k+m)-\left({m+1}\atop 2\right)\right)t\right) (V_\alpha(\xi)+W_\alpha(\xi)).
\end{align*}

For our special desire we do not need to formulate here the Fourier theory in full generality, therefor we recommend \cite{ebert/wirth}. As we already mentioned we can expand $f\in L^2(S^m,\C_m)$ in spherical monogenics
\begin{align*}
    f(\xi) &= \sum_{k=0}^\infty \sum_{|\alpha|=k} (\hat f_V(\alpha) V_\alpha(\xi)+\hat f_W(\alpha) W_\alpha(\xi)).
\end{align*}

There are many ways to assume the sphere as a homogeneous space; here  we look at it as $S^m\simeq SO(m+1)/SO(m)$.
Consequently, the convolution of two functions $f,h\in L^2(S^m,\C_m)$ gives a function on $SO(m)$, which is constant over co-sets $gSO(m-1)$ and hence defines a function on $S^m$ \cite{ebert/wirth}.
\begin{align*}
    (f\ast h)(\xi;\omega)=\int_{SO(m+1)} \overline{f(g(\xi))}h(g(\omega))\dx g,
\end{align*}
where $\dx g$ is taken as the Haar measure and $g(\xi)$ stands for element, obtained the rotation $g$ applied to $\xi$.
We shall look at the invariance properties of the convolution product.
There is an $\eta\in SO(m)$ with
\begin{align*}
    (f\ast h)(\xi;\eta(\xi)) &=(f\ast h)(g(\xi);g(\eta(\xi)))=:(f\ast h)(\eta)\;\forall g\in SO(m).
\end{align*}
Since this $\eta$ is not unique but can be chosen as $\eta\zeta$, with $\zeta$ coming from the stabilizer of $\eta$ we find $(f\ast h)(\eta SO(m-1))=(f\ast h)(\eta)$ for a certain subgroup (of the form) $SO(m-1)$ in $SO(m)$, by factoring this subgroup $(f\ast h)$ becomes a function on $S^m$.

On the other hand this function is invariant under the left action of the stabilizer in $SO(m)$ of $\xi$, which also is of the form $SO(m-1)$. functions with this property are called to be zonal.

One can formulate the convolution theorem, which assumes the form
\begin{theorem}
    For $f,h\in L^2(S^m,\C_m)$ it is
    \begin{align*}
        f\ast g= \sum_{k=0}^\infty \sum_{|\alpha|=k} (\hat f_V(\alpha)\hat h_V(\alpha) V_\alpha(\xi)+\hat f_W(\alpha)\hat h_W(\alpha)W_\alpha(\xi))
    \end{align*}
\end{theorem}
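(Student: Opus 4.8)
The plan is to reduce the convolution theorem on $S^m$ to the convolution theorem on the group $SO(m+1)$, which was stated in the Peter--Weyl section together with the identity $\widehat{f*r}(\pi)=\hat f(\pi)\hat r(\pi)$. First I would recall that, under the identification $S^m\simeq SO(m+1)/SO(m)$, a function $f\in L^2(S^m,\C_m)$ lifts to a function $\tilde f$ on $SO(m+1)$ that is right-invariant under the stabilizer subgroup $SO(m)$, so its Fourier coefficients satisfy $\hat{\tilde f}(\pi)=\mathfrak P_{SO(m)}(\pi)\hat{\tilde f}(\pi)$; equivalently, in a suitable basis of $\H_\pi$ the matrix $\hat f(\pi)$ is supported on the rows spanning the $SO(m)$-invariant subspace. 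The decomposition $L^2(S^m,\C_{m+1})=\bigoplus_{k}\bigl(\mathcal M^+(k,\C_{m+1})\oplus\mathcal M^-(k,\C_{m+1})\bigr)$ with the explicit bases $\{V_\alpha\}$ and $\{W_\alpha\}$ plays the role of the Peter--Weyl decomposition: each pair $(\mathcal M^+(k),\mathcal M^-(k))$ corresponds to one irreducible $\pi_k$ of $SO(m+1)$ (restricted through $H$ and $L$), and the coefficients $\hat f_V(\alpha),\hat f_W(\alpha)$ are exactly the entries of $\hat f(\pi_k)$ against this basis.

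Next I would carry out the following steps in order. (i) Write the convolution $(f*h)(\xi;\omega)=\int_{SO(m+1)}\overline{f(g(\xi))}\,h(g(\omega))\dx g$ as the group convolution of the lifts $\tilde f,\tilde h$ on $SO(m+1)$ (this is the content of the "group convolution" line in the definitions, with $\widehat{f*h}=\hat h\hat f$), and note it is well defined on $S^m$ because it is constant on cosets $gSO(m-1)$ and zonal, as argued in the excerpt. (ii) Apply the convolution theorem on $SO(m+1)$: on each irreducible block $\pi_k$ the Fourier coefficient of $f*h$ is the matrix product $\hat h(\pi_k)\hat f(\pi_k)$. (iii) Since both $\hat f(\pi_k)$ and $\hat h(\pi_k)$ are supported on the one-dimensional $SO(m)$-invariant row (zonality of $h$ forces $\hat h(\pi_k)$ to act as a scalar on that line), the matrix product collapses to the pointwise products $\hat f_V(\alpha)\hat h_V(\alpha)$ on the $\mathcal M^+(k)$-part and $\hat f_W(\alpha)\hat h_W(\alpha)$ on the $\mathcal M^-(k)$-part. (iv) Re-expand via $f*h(\xi)=\sum_k\sum_{|\alpha|=k}\bigl(\widehat{f*h}_V(\alpha)V_\alpha(\xi)+\widehat{f*h}_W(\alpha)W_\alpha(\xi)\bigr)$ and substitute the products from (iii) to obtain the claimed formula.

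The main obstacle I expect is step (iii): one must be careful that the two homogeneity sectors $\mathcal M^+(k)$ and $\mathcal M^-(k)$ together form a single $SO(m+1)$-irreducible (they are interchanged only by $L$, not by $H$), and that the zonality of the convolution kernel — more precisely, the fact that one of the factors is evaluated at the base point $x_0$ in the "zonal product" — is exactly what diagonalizes the block and prevents mixing of the $V$- and $W$-coefficients. Establishing that the matrices $\hat f(\pi_k)$, $\hat h(\pi_k)$ are simultaneously reduced to the relevant rank-one (per sector) form, so that matrix multiplication becomes entrywise multiplication of the scalars $\hat f_V(\alpha)$, $\hat f_W(\alpha)$, is the crux; everything else is bookkeeping with the bases $\{V_\alpha\}$, $\{W_\alpha\}$ and the eigenvalue data \eqref{harmonics_eigenv}, \eqref{monogenics_eigenv} already recorded above. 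Once that is in place the theorem follows by matching coefficients sector by sector.
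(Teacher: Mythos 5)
Your route is genuinely different from the paper's, so it is worth saying what the paper does: it proves the theorem in one step by expanding both $f$ and $h$ in the orthonormal system $\{V_\alpha,W_\alpha\}$, inserting the two series into the defining integral, interchanging integration and summation by Fubini, and letting orthonormality kill all terms with mismatched indices, which directly produces the per-$\alpha$ products $\hat f_V(\alpha)\hat h_V(\alpha)$ and $\hat f_W(\alpha)\hat h_W(\alpha)$. You instead lift everything to $SO(m+1)$ and want to quote the group convolution theorem $\widehat{f\ast h}(\pi)=\hat h(\pi)\hat f(\pi)$ together with the support properties of Fourier coefficients of functions on $S^m\simeq SO(m+1)/SO(m)$.

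The difficulty is your step (iii), and it is a genuine gap rather than bookkeeping. First, the theorem makes no zonality assumption on $h$, so the phrase ``zonality of $h$ forces $\hat h(\pi_k)$ to act as a scalar'' is not available to you. Second, even granting the correct support statement --- for a lifted function the matrix $\hat f(\pi_k)$ is supported on the single row corresponding to the one-dimensional $SO(m)$-fixed subspace of $\H_{\pi_k}$ --- the matrix product $\hat h(\pi_k)\hat f(\pi_k)$ then involves only the one ``zonal'' entry of $\hat h(\pi_k)$ (the entry in the invariant row and column): it rescales the whole coefficient row of $f$ by a single scalar per irreducible, which is the Funk--Hecke-type statement, not the claimed entrywise products $\hat f_V(\alpha)\hat h_V(\alpha)$ for every $\alpha$. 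To get per-$\alpha$ products one must pair the two expansions against each other inside the integral, i.e. carry out precisely the orthogonality computation that constitutes the paper's proof; the matrix-product formalism as you set it up cannot see any coefficient of $h$ beyond its zonal component. A smaller inaccuracy in the same step: the $SO(m+1)$-irreducible blocks on the sphere are the harmonic spaces $\mathcal H(k)=\mathcal M^+(k)\oplus\mathcal M^-(k-1)$, not $\mathcal M^+(k)\oplus\mathcal M^-(k)$ (the latter is the $L_*(\Omega)$-eigenspace), so the block you propose to diagonalize is misidentified.
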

\begin{proof}
	One writes the functions in the series expansion in spherical monogenics. Subsequently one changes order of integration and summation, which is possible by Fubini's theorem and uses the orthonormality property of the $V_\alpha$ and $W_\alpha$.
\end{proof}
\begin{definition}
	The family of functions
	\begin{align*}
		\{\Psi_\rho(\xi) := \sum_{k=0}^\infty \sum_{|\alpha|=k} Exp\left(-\left(k(k+m)-\left({m+1}\atop 2\right)\right)\frac{t}{2}\right) (V_\alpha(\xi)+\alpha)W_\alpha(\xi))\},
	\end{align*}
	defines the diffusive wavelets corresponding to the modified diffusive operator $\Delta_\xi+\Gamma_\xi-\left({m+1}\atop 2\right){\bf 1}$.
	
	The corresponding wavelet transformation is given by
	\begin{align*}
		\mathcal W f(\varrho,g)  := \langle f(\cdot), \Psi_\varrho(g^{-1}(\cdot))\rangle_{L^2(S^m,\C_m)}
	\end{align*}
\end{definition}

\begin{theorem}
	The wavelet transform is invertible on its range:
	\begin{align*}
		f(\xi) &= \int_{t\to 0}^\infty \mathcal W f(\varrho,g) \ast \Psi_\varrho(g(\xi))\dx\varrho\; \forall f\in L^2(S^m,\C_m).
	\end{align*}
\end{theorem}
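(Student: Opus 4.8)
The plan is to verify the reconstruction formula by passing to the spherical--monogenic expansion and reducing everything to the admissibility condition \eqref{admis_cond}, exactly as in the scalar situation of \cite{MMAS_I} and \cite{ebert/wirth}.

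First I would expand $f\in L^2(S^m,\C_m)$ in the orthogonal system $\{V_\alpha,W_\alpha\}$,
$$f(\xi)=\sum_{k=0}^\infty\sum_{|\alpha|=k}\bigl(\hat f_V(\alpha)V_\alpha(\xi)+\hat f_W(\alpha)W_\alpha(\xi)\bigr),$$
and read off from the definition that the only datum carried by $\Psi_\varrho$ on the degree--$k$ block is the scalar $\psi_k(\varrho)=\mathrm{Exp}\!\left(-\bigl(k(k+m)-\binom{m+1}{2}\bigr)\frac{t}{2}\right)$, which is real and positive. Using $S^m\simeq SO(m+1)/SO(m)$ and the convolution theorem for $S^m$ together with $\mathcal W f(\varrho,g)=\langle f(\cdot),\Psi_\varrho(g^{-1}(\cdot))\rangle_{L^2(S^m,\C_m)}=(f\bullet\Psi_\varrho)(g)$, the coefficients of $\mathcal W f(\varrho,\cdot)$ are $\psi_k(\varrho)\hat f_V(\alpha)$ and $\psi_k(\varrho)\hat f_W(\alpha)$ (no conjugate, since $\psi_k$ is real); convolving once more with $\Psi_\varrho$ picks up another factor $\psi_k(\varrho)$, so the inner object $\mathcal W f(\varrho,\cdot)\ast\Psi_\varrho$ has degree--$k$ coefficients $\psi_k(\varrho)^2\hat f_V(\alpha)$ and $\psi_k(\varrho)^2\hat f_W(\alpha)$.

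Next I would carry out the $\varrho$--integral and exchange it with the summation over $k$ and $\alpha$. On the $V_\alpha$-- and $W_\alpha$--components this leaves the scalar factor $\int_{t\to 0}^\infty\psi_k(\varrho)^2\,\alpha(\varrho)\dx\varrho$, which by the admissibility condition \eqref{admis_cond} is exactly the degree--$k$ Fourier multiplier of the diffusive approximate identity $p_t$ in the limit $t\to 0$, hence equals $1$ by \eqref{app_ID_prop_I}. Therefore the right--hand side collapses to $\sum_{k}\sum_{|\alpha|=k}\bigl(\hat f_V(\alpha)V_\alpha+\hat f_W(\alpha)W_\alpha\bigr)=f$, which is the assertion.

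The main obstacle is the rigorous justification of the two interchanges of integration and summation -- the compact--group integral implicit in the two convolutions, and the improper integral in $\varrho$ near $t\to 0$ -- together with the passage to the limit. The first is handled by Fubini's theorem, since every $\Psi_\varrho$ and every truncated partial sum lies in $L^1\cap L^2$ and the $V_\alpha,W_\alpha$ are orthonormal; the second uses that $|\psi_k(\varrho)|$ is bounded by the constant $M$ from the definition of a diffusive approximate identity, so that the truncated reconstructions are dominated in $L^2(S^m,\C_m)$ uniformly in the cutoff, and dominated convergence together with $\lim_{t\to 0}\hat p_t(\pi)={\bf 1}$ yields convergence in $L^2$. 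One further point to make explicit is the Clifford--valued bookkeeping: because $\mathcal M^+(m,k,\C_{m+1})$ and $\mathcal M^-(m,k,\C_{m+1})$ are precisely the eigenspaces of $L_\ast(\Omega)$ by \eqref{monogenics_eigenv}--\eqref{harmonics_eigenv}, the relevant Fourier coefficients act as the scalars $\psi_k(\varrho)$ block by block, so $\check\Psi_\varrho\ast\Psi_\varrho$ is a genuinely positive multiplier on each degree and the non--commutativity of $\C_m$ never enters; this is why the admissibility condition may be imposed in the simple scalar form built into the definition of $\Psi_\varrho$.
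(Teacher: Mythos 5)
Your argument is correct in substance but runs on the Fourier side, whereas the paper argues directly with the integrals. The paper's proof writes $\mathcal W f(\varrho,\cdot)\ast\Psi_\varrho$ out as a double integral over $S^m$ and the rotation group, interchanges the order of integration, recognizes the inner group integral as $(\check\Psi_\varrho\ast\Psi_\varrho)(\zeta,\xi)$, performs the $\varrho$-integration via the admissibility relation to recover the kernel $P_t$, and finishes with $\lim_{t\to 0}f\ast P_t=f$; the spherical-monogenic expansion never enters the proof itself. You instead expand $f$ in $\{V_\alpha,W_\alpha\}$, use the convolution theorem on $S^m$ to reduce both convolutions to the real scalar multipliers $\psi_k(\varrho)$ acting blockwise, and reduce the claim to the per-degree identity $\int\psi_k(\varrho)^2\alpha(\varrho)\dx\varrho=1$. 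What your route buys is transparency: it makes explicit why the Clifford-valued, noncommutative setting causes no trouble (everything acts as a real scalar on each degree-$k$ block) and converts admissibility into a concrete scalar condition per degree, at the cost of an additional interchange of sum and integral, which you do address. The paper's route is shorter and needs no explicit eigenfunction expansion. One caveat applies equally to both: with $\Psi_\varrho$ exactly as defined, $\int_0^\infty e^{-(k(k+m)-\binom{m+1}{2})\varrho}\dx\varrho=\bigl(k(k+m)-\binom{m+1}{2}\bigr)^{-1}$ rather than $1$, so a $k$-dependent normalization of the wavelet (or of the weight $\alpha$) is being assumed silently; the paper hides this behind ``by construction we are dealing with a diffusive approximate identity'', and your appeal to \eqref{admis_cond} and \eqref{app_ID_prop_I} does exactly the same, so your proof is no less rigorous than the paper's, but neither verifies that normalization for the family actually written down.
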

\begin{proof}
\begin{align*}
	\int_{t\to 0}^\infty \mathcal W f(\varrho,g) \ast \Psi_\varrho(g(\xi)) \dx\varrho &=   \int_{t\to 0}^\infty  \int_{SO(m)}\left(\int_{S^m} \overline{f(\zeta)} \Psi_\varrho(g^{-1}(\zeta))\dx\zeta \right) \Psi_\varrho(g^{-1}(\xi))\dx g \;\dx\varrho
\end{align*}
By construction we are dealing with an diffusive approximate identity, hence the change of order of integration is valid.
\begin{align*}
 &=\int_{t\to 0}^\infty  \int_{S^m} \overline{f(\zeta)} \left(\int_{SO(m)} \Psi_\varrho(g^{-1}(\zeta) \Psi_\varrho(g^{-1}(\xi))\right)\dx g )\dx\zeta\;\dx\varrho\\
&=\int_{S^m}   \overline{f(\zeta)} \left(\int_{t\to 0}^\infty  (\Psi_\varrho\ast \Psi_\varrho)(\zeta,\xi)\right)\;\dx\varrho\;\dx\zeta\\
&= \lim_{t\to 0}f \ast P_t(\xi) =f(\xi)
\end{align*}
\end{proof}

\subsection{Modification of the operator $L_\ast(\Omega)$}

It is an easy change to look at the operator $\Delta-\Gamma$ instead of $L_\ast(\Omega)$, we just replace the eigenvalues in the series expansion of the fundamental solution by the eigenvalues of $\Delta-\Gamma$, which is obviously $k(k+m)$, since these operators differ from each other only by a multiple of the identity operator
\begin{align*}
	\sum_{k=0}^\infty \sum_{|\alpha|=k} Exp\left(-k(k+m)t\right) (V_\alpha(\xi)+W_\alpha(\xi))
\end{align*}
The corresponding wavelets are now of the form
\begin{align*}
		\left\{\Psi_\rho(\xi) := \sum_{k=0}^\infty \sum_{|\alpha|=k} Exp\left(-k(k+m)\frac{t}{2}\right) (V_\alpha(\xi)+\alpha)W_\alpha(\xi))\right\},
\end{align*}

\subsection{Eigenfunction of $\Delta_{\spin}$ and the heat kernel on $\spin(m)$}\label{sec_eigenfct_on_Spin}

Let us now take a look at the case of the Spin group. Eigenfunctions of $\Delta_{\spin}$ on $\spin(m)$ can be given as matrix coefficients of eigenvectors of $\pi_\ast(\Omega)$, for any irreducible representation $\pi$.

All irreducible representations are of the form $H$ or $L$ in the subspace of simplicial harmonics or monogenics, respectively. For the moment we denote the eigenfunction with respect to the weight ${(l_1,...,l_{[\frac{m}{2}]})}$ by $v_{(l_1,...,l_{[\frac{m}{2}]})}$. Consequently, all functions of the form
\begin{align}
	h(s) &= \int_{\C_m} \overline{H(s)v_{(l_1,...,l_{[\frac{m}{2}]})}(a)}v_{(s_1,...,s_{[\frac{m}{2}]})}(a)\dx a,\qquad(l_1,...,l_{[\frac{m}{2}]}),(s_1,...,s_{[\frac{m}{2}]})\in (2\Z)^{[\frac{m}{2}]} \label{probl_int}\\
	l(s) &= \int_{\C_m} \overline{L(s)v_{(l_1,...,l_{[\frac{m}{2}]})}(a)}v_{(s_1,...,s_{[\frac{m}{2}]})}(a)\dx a,\qquad(l_1,...,l_{[\frac{m}{2}]}),(s_1,...,s_{[\frac{m}{2}]})\in ((2\Z+1)^{[\frac{m}{2}]}\label{probl_int_I}
\end{align}
represents harmonics and harmonic functions are linear combinations of them.

We can also chose the following way:

Since we already know the eigenfunctions of $H_\ast(\Omega)$ and $L_\ast(\Omega)$ if we can express $\Delta_{\spin}$ in terms of $H_\ast(\Omega)$ and $L_\ast(\Omega)$, then we easily obtain the eigenfunctions of $\Delta_{\spin}$.

This can be easily done for the Dirac operator on $\spin(m)$, which we denote by $\partial_s$. From Lemma $\ref{Lemma2}$ we know that a function on $\spin(m)$ can be regarded as a pair of functions $\alpha(g)$ and $\beta(g)$ on $g\in SO(m)$ or as a pair of a simplicial variable, respectively. Consequently, for a function $f$ on $\spin(m)$ we have
\begin{align*}
	f(s)&= H(s)\alpha(a(u_1,...,u_m)) + L(s)\beta(a(u_1,...,u_m)),
\end{align*}
where the simplicial variable $a(u_1,...,u_m)$ is fixed, in order to have the dependance on $s$ only.
For the action of the Dirac operator $\partial_s=\sum_{\C_{m,2}} e_{ij}(H_\ast(e_{ij})+L_\ast(e_{ij}))$ on $f$, by \eqref{H(e_ij)} and \eqref{L(e_ij)}  we have
\begin{align*}
	\partial_s f(s) &= \sum_{i<j} e_{ij}(H_\ast(e_{ij}) \alpha +L_\ast(e_{ij})\beta )\\
	&=\Gamma H(s)\alpha + (\Gamma -\left(m\atop{2}\right))L(s)\beta)
\end{align*}
Hence, we can  immediately deduce the eigensystem of $\partial_s$. The same construction we would like to have for $\Delta_s$. Therefore we look at the action of $\Delta_s$ on $H(s)\alpha$ and $L(s)\beta$ separately:
\begin{align}\label{Delat_{spin}}
	\Delta_{\spin} H(s)\alpha(a(u_1,...,u_m))&=(\sum_{j=1}^m\Delta_{u_j}+\sum_{k<l}\Delta_{u_k u_l}) H(s)\alpha(a(u_1,...,u_m))\\
	\Delta_{\spin} L(s)\beta(a(u_1,...,u_m))& =(\sum_{j=1}^m\Delta_{u_j}+\sum_{k<l}\Delta_{u_k u_l}+\sum_{j=1}^m\Gamma_{u_j}-\left(m\atop{2}\right)) L(s)\beta(a(u_1,...,u_m)).
\end{align}
Since for the Laplacian in the components $u_m$ we have
\begin{align*}
	\Delta_u=\sum_{i<j}L^2_{u,e_{ij}}=\Gamma_u(m-2-\Gamma_u),\mbox{ with }\Gamma_u=u\wedge \partial _u,
\end{align*} the only critical point is the study of the part of the mixed Laplacian
\begin{align*}
	 \Delta_{uv} =\sum_{i<j}L_{u,e_{ij}}L_{v,e_{ij}}.
\end{align*}
To this end we can express the action of $\Delta_{uv}$ on monogenics in terms of $u,v, \partial_u$ and $\partial_v$, as we did for $\Delta_u$.
A rather technical calculation, which can be found in \cite{preprint_BES}
gives
\begin{align*}
    \Delta_{uv}f(u,v) &= -<v,\dot{\partial}_u><u,\partial_v>\dot{f}(u,v).
\end{align*}

where the dot means, that the derivative $\partial_u$ is applied directly to $f(u,v)$, but not to $\langle u,\partial_v\rangle$ (Hestenes overdot notation).

Consequently, we have
\begin{align*}
	&\Delta_{\spin} H(s)\alpha (a(u_1,...,u_m))\\
&= \left(\sum_{j=1}^m \Gamma_{u_j}(m-2-\Gamma_{u_j})-\sum_{k<l}\langle u_{k},\dot{\partial_{u_l}}\rangle \langle u_l,\partial_{u_k}\rangle  \right)H(s)\dot{\alpha}(a(u_1,...,u_m))\\
\\
&\Delta_{\spin} L(s)\beta (a(u_1,...,u_m))\\
	&=\left(\sum_{j=1}^m \Gamma_{u_j}(m-2-\Gamma_{u_j})-\sum_{k<l}\langle u_{k},\dot{\partial_{u_l}}\rangle \langle u_l,\partial_{u_k}\rangle  +\sum_{j=1}^m\Gamma_{u_j}-\left(m\atop{2}\right) \right) L(s)\dot{\beta}(a(u_1,...,u_m))
\end{align*}	
A closer look to the operator $\langle u_{k},\dot{\partial_{u_l}}\rangle$ shows that
\begin{align*}
    \langle u,\partial_{v}\rangle &=\sum_{i=1}^m u_i\partial_{v_{i}},
\end{align*}
which can be viewed as a mixed Euler operator c.f.\cite{preprint_BES}. In fact, from the characteristic system of simplicial monogenics \eqref{simplicial_system} we know that simplicial functions vanish under the mixed Euler operator.

We discussed already simplicial monogenics in Section \ref{sec_simpl_fct}. Let $k_1,...,k_m$ ($l_1,...,l_m$)  denote the degree of homogeneity of $\alpha$ ( or $\beta$) in the variable $u_1,...,u_m$, respectively. Therefore, $\Gamma_{u_i}(H(s)\alpha+L(s)\beta)=(k_i\alpha+l_i\beta)$. Hence for functions $f(s)=H(s)\alpha+L(s)\beta$ on $\spin(m)$ we have
\begin{align*}
	&\Delta_{\spin} H(s)\alpha( a(u_1,...,u_m))\\
&= \left(\sum_{j=1}^m \Gamma_{u_j}(m-2-\Gamma_{u_j})\right)H(s)\dot{\alpha}(a(u_1,...,u_m))\\
&= \left(\sum_{j=1}^m k_j(m-2-k_j)\right) H(s)\alpha(a(u_1,...,u_m))
\\
\end{align*}
and
\begin{align*}
&\Delta_{\spin} L(s)\beta (a(u_1,...,u_m))\\
	&=\left(\sum_{j=1}^m \Gamma_{u_j}(m-2-\Gamma_{u_j})+\sum_{j=1}^m\Gamma_{u_j}-\left(m\atop{2}\right) \right) L(s)\dot{\beta}(a(u_1,...,u_m))\\
&= \left(\sum_{j=1}^m l_j(m-2-l_j) \right)L(s)\beta(a(u_1,...,u_m)).
\end{align*}	

Such that according to our construction of wavelets we obtain the following.

\begin{theorem}
Clifford-valued diffusive wavelets on $\spin(m)$ assume the form
\begin{align*}
    \psi_\rho(s) &= \sum_{k=1}^\infty \sum_{\mathfrak m \in \Z^k}\exp\left(-\left(\sum_{j=1}^m k_j(m-2-k_j) \right)\frac{t}{2}\right) H(s)\mathcal K_\mathfrak m \\
    & \qquad\qquad  + \exp\left(-\left(\sum_{j=1}^m k_j(m-2-k_j)\right)\frac{t}{2}\right) L(s)\mathcal L_{\mathfrak m}
\end{align*}
where $\mathcal K_\mathfrak m$ and $\mathcal L_{\mathfrak m}$ form a complete system of simplicial functions. (see \eqref{simpl_harm} and \eqref{simpl_mon}).
\end{theorem}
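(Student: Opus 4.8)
The plan is to assemble the wavelet formula directly from the diffusive-wavelet recipe of Section~\ref{sec:idea_dif_wav}, applied to the compact group $\G = \spin(m)$, using as the underlying diffusion operator the Laplace--Beltrami operator $\Delta_{\spin}$ whose spectral decomposition was worked out immediately above. First I would recall that, by the Peter--Weyl theorem together with Lemma~\ref{Lemma2}, every $f \in L^2(\spin(m))$ can be written as a pair $(\alpha,\beta)$ of functions of a simplicial variable, i.e.\ $f(s) = H(s)\alpha(a(u_1,\dots,u_m)) + L(s)\beta(a(u_1,\dots,u_m))$ with the simplicial variable held fixed. The key input is the computation just carried out, namely that on the simplicial harmonics the operator $\Delta_{\spin}$ acts on the $H$-part with eigenvalue $\sum_{j=1}^m k_j(m-2-k_j)$ (where $k_j$ is the degree of homogeneity in $u_j$), and on the simplicial monogenics it acts on the $L$-part with eigenvalue $\sum_{j=1}^m l_j(m-2-l_j)$. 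Thus $-\Delta_{\spin}$ is diagonalized by the systems $\mathcal K_{\mathfrak m}$ (simplicial harmonics, see \eqref{simpl_harm}) and $\mathcal L_{\mathfrak m}$ (simplicial monogenics, see \eqref{simpl_mon}), indexed by weight vectors $\mathfrak m \in \Z^k$.

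Next I would form the heat semigroup $P_t = e^{t\Delta_{\spin}}$ and write its kernel as the spectral series
\begin{align*}
    p_t(s) &= \sum_{k=1}^\infty \sum_{\mathfrak m \in \Z^k} \exp\Big(-\big(\textstyle\sum_{j=1}^m k_j(m-2-k_j)\big) t\Big) \big( H(s)\mathcal K_{\mathfrak m} + L(s)\mathcal L_{\mathfrak m}\big),
\end{align*}
and verify that $\{p_t\}_{t>0}$ is a diffusive approximate identity in the sense of the Definition in Section~\ref{sec:idea_dif_wav}: the Fourier coefficients $\hat p_t(\pi)$ are bounded (they are exponentials of nonpositive numbers, hence $\le 1$ in norm), they converge to $\mathbf 1$ as $t \to 0$, to $0$ as $t \to \infty$ on the co-finite set $\hat\G_+$ (where the eigenvalue is strictly negative), and $-\partial_t \hat p_t$ is positive. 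Then the admissibility condition \eqref{admis_cond} $p_t = \int_t^\infty \check\Psi_\varrho \ast \Psi_\varrho \,\alpha(\varrho)\dx\varrho$ forces the wavelet to be, up to the usual choice of weight $\alpha$, the ``square root'' obtained by halving the exponent, exactly as in the sphere case treated above; this yields the stated $\psi_\rho$ with the factor $t/2$ in each exponential. The reconstruction formula then follows verbatim from the general computation in Section~\ref{sec:idea_dif_wav}.

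The main obstacle --- and the place where care is required --- is the bookkeeping of which weight vectors actually occur and the completeness of the system $\{\mathcal K_{\mathfrak m}\} \cup \{\mathcal L_{\mathfrak m}\}$: one must check that the simplicial harmonics and monogenics of all admissible weights together span $L^2(\spin(m))$, which is where Lemma~\ref{Lemma2} (the decomposition into an even part identified with a function on $SO(m)$ and an $L(s)$-twisted odd part) combines with the Peter--Weyl decomposition and the results of \cite{VanSomCon} on irreducibility of the simplicial spaces under $H$ and $L$. A secondary subtlety is that the index $\mathfrak m \in \Z^k$ in the statement must be read in the normalization where integer and half-integer $\spin(m)$-weights both appear (the $H$-part carrying the even/integer weights, the $L$-part the odd/half-integer weights, as noted around \eqref{probl_int}--\eqref{probl_int_I}), so that the two eigenvalue expressions $\sum_j k_j(m-2-k_j)$ are indexed consistently; once the indexing is fixed, the formula for $\psi_\rho$ is just the diffusive-wavelet template with the $\spin(m)$ heat-kernel eigenvalues substituted in, and no further computation is needed.
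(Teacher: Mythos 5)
Your proposal follows essentially the same route as the paper: the paper derives the theorem directly from the preceding computation, namely the decomposition $f(s)=H(s)\alpha+L(s)\beta$ of Lemma \ref{Lemma2}, the eigenvalue formulas $\sum_j k_j(m-2-k_j)$ for $\Delta_{\spin}$ on the simplicial systems \eqref{simpl_harm}, \eqref{simpl_mon}, and the general diffusive-wavelet template (heat-kernel spectral series with the exponent halved), which is exactly your plan. If anything you are more explicit than the paper (which gives no separate proof) in checking the approximate-identity axioms and the completeness/indexing of $\{\mathcal K_{\mathfrak m}\}\cup\{\mathcal L_{\mathfrak m}\}$; just note that your remark that the exponents are automatically nonpositive inherits the paper's own sign conventions for the eigenvalues rather than being verified independently.
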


\kommentar{
$\langle v,\dot{\partial_u}\rangle \langle u,\partial_v\rangle$, i.e. $\Delta_{u_k u_l}Q_k =\Delta_{u_k u_l} P_k=0$ for all $P_k\in \mathcal M^+(m,k,\C_{m})$ and $Q_k\in\mathcal M^-(m,k,\C_m)$,
the eigenfunctions of $\Delta_s$ are of the form $f(s)=H(s) \alpha(u_1,..,u_m)+L(s)\beta(u_1,...,u_m)$ with $\alpha$ and $\beta$ are simplicial monogenics.

Let again $V_\alpha$ be a simplicial monogenic from $\mathcal M^+(m,k,\C_m)$ and $W_\alpha$ from $\mathcal M^-(m,k,\C_m)$ (where $k=|\alpha|$), with
\begin{align*}
	\Gamma V_\alpha&= -kV_\alpha\\
	\Gamma W_\alpha &=(k+m) W_\alpha.
\end{align*}
Therefrom we get for the corresponding functions $H(s)V_\alpha$, $L(s) V_\alpha$, $H(s)W_\alpha$ and $L(s) W_\alpha$, from which we can build the result for all functions on $\spin$

\begin{align*}
	\Delta_{\spin} H(s) V_\alpha &= (-k^2-k(m+2)) H(s) V_\alpha,\\
	\Delta_{\spin} H(s) W_\alpha &= (-k^2-2(m+k)-km) H(s) W_\alpha\\
	\Delta_{\spin} L(s) V_\alpha &= (-k^2-k(m+1)-\left(m\atop{2}\right)) L(s) V_\alpha,\\
	\Delta_{\spin} H(s) W_\alpha &= (-k^2-(m+k)-km-\left(m\atop{2}\right)) L(s) W_\alpha
\end{align*}
Following the general construction, therefrom the heat kernel on $\spin(m)$ now can be given in a series expansion over all eigenfunctions of $\Delta_{\spin}$:
\begin{align*}
	&\sum_{k=0}^\infty \sum_{|\alpha|=k}\sum_{|\beta|=k} Exp\left((-k^2-k(m+2))t\right)H(s)V_\alpha(\xi)+ Exp\left((-k^2-2(m+k))t\right)H(s)W_\beta(\xi)\\
    &\quad + Exp\left((-k^2-k(m+1)-\left(m\atop{2}\right))t\right)L(s)V_\alpha(\xi) \\
    &\qquad +Exp\left((-k^2-(k+m)-km-\left(m\atop{2}\right))t\right)L(s)W_\beta(\xi)
\end{align*}

}

\bibliography{Literatur}

\begin{thebibliography}{10}

\bibitem{ABLSS}
W.W. Adams, C.~A. Berenstein, P.~Loustaunau, I.~Sabadini, and D.C. Struppa.
\newblock Regular functions of several quaternionic variables.
\newblock {\em J. Geom. Anal}, 9:1--15, 1999.

\bibitem{Ant/VanN}
J.-P. Antoine and P.~Vandergheynst.
\newblock Wavelets on the n-sphere and related manifolds.
\newblock {\em Journal of {M}athematical {P}hysics}, 39(8):3987--4008, 1998.

\bibitem{Ant/Van}
J.-P. Antoine and P.~Vandergheynst.
\newblock Wavelets on the {$2$}-sphere: a group-theoretical approach.
\newblock {\em Appl. Comput. Harmon. Anal.}, 7(3):262--291, 1999.

\bibitem{Ant/Van2}
J.-P. Antoine and P.~Vandergheynst.
\newblock Wavelets on the two-sphere and other conic sections.
\newblock {\em J. Fourier Anal. Appl.}, 13(4):369--386, 2007.

\bibitem{BE-MMAS}
S.~Bernstein and S.~Ebert.
\newblock {Wavelets on $S^3$ and $SO(3)$}.
\newblock {\em Mathematical Methods in the Applied Sciences}, 2009.

\bibitem{MMAS_I}
S.~Bernstein and S.~Ebert.
\newblock {Wavelets on $S^3$ and $SO(3)$ - their construction, relation to each
  other and Radon transform of Wavelets on SO(3)}.
\newblock {\em MMAS}, 2010.

\bibitem{preprint_BES}
S.~Bernstein, S.~Ebert, and F.~Sommen.
\newblock Diffusive wavelets and clifford analysis.
\newblock {\em Preprint TU Bergakademie Freiberg, Fakult{\"a}t f{\"u}r
  Mathematik und Informatik}, 2011-05, 2011.

\bibitem{bernstein/schaeben}
S.~Bernstein and H.~Schaeben.
\newblock A one-dimensional radon transform on {$SO(3)$} and its application to
  texture goniometry.
\newblock {\em Mathematical Methods in the Applied Sciences}, 28:1269--1289,
  July 2005.

\bibitem{D_Bump}
D.~Bump.
\newblock {\em Lie Groups}.
\newblock Springer-Verlag, 2004.

\bibitem{cartan}
E.~Cartan.
\newblock {\em The Theory of Spinors}.
\newblock Dover, 1981.

\bibitem{Co}
D.~Constales.
\newblock {\em The relative position of $L_2$-domains in complex and Clifford
  analysis}.
\newblock PhD thesis, State Uni. Ghent, 1992.

\bibitem{DSS}
R.~Delanghe, F.~Sommen, and V.~Sou{\v{c}}ek.
\newblock {\em Clifford algebra and spinor-valued functions, A function theory
  for the Dirac operator.}, volume~53 of {\em Mathematics and its
  Applications}.
\newblock Kluwer Academic Publishers Group, Dordrecht, 1992.

\bibitem{DoHeSoA}
C.~Doran, D.~Hestenes, F.~Sommen, and N.~Van Acker.
\newblock Lie groups as spin groups.
\newblock {\em J. Math. Phys.}, 34(8):3642--3669, 1993.

\bibitem{Ebert}
S.~Ebert.
\newblock {\em Wavelets on Lie groups and homogenuous spaces}.
\newblock PhD thesis, TU Bergakademie Freiberg, Department of Mathematics and
  Informatics, 2011.

\bibitem{ebert/wirth}
S.~Ebert and J.~Wirth.
\newblock Diffusive wavelets on compact lie groups and homogeneous spaces.
\newblock {\em Proc. R. Soc. Edinb., Sect. A, Math.}, 141(3):497--520, 2011.

\bibitem{Intro_comp_lie_groups}
H.~D. Fegan.
\newblock {\em Introduction to compact {L}ie groups}, volume~13 of {\em Series
  in Pure Mathematics}.
\newblock World Scientific Publishing Co. Inc., River Edge, NJ, 1991.

\bibitem{Freeden/Windheuser}
W.~Freeden and U.~Windheuser.
\newblock Combined spherical harmonic and wavelet expansin- a future concept in
  earth's gravitational determination.
\newblock {\em Laboratory of Technomethematics, Geomathematics Group,
  University of Kaiserslautern}, 1996.

\bibitem{Geller/Mayeli}
D.~Geller and A.~Mayeli.
\newblock Continous wavelets and frames on stratified lie groups i.
\newblock {\em J. of Fourier Analysis and Appl.}, 12:543--579, 2006.

\bibitem{GM}
J.~E. Gilbert and M.~A.~M. Murray.
\newblock {\em Clifford algebras and {D}irac operators in harmonic analysis},
  volume~26 of {\em Cambridge Studies in Advanced Mathematics}.
\newblock Cambridge University Press, Cambridge, 1991.

\bibitem{GrossRichards}
K.~I. Gross and D.~St.~P. Richards.
\newblock Special functions of matrix argument. {I}. {A}lgebraic induction,
  zonal polynomials, and hypergeometric functions.
\newblock {\em Trans. Amer. Math. Soc.}, 301(2):781--811, 1987.

\bibitem{JHla}
J.~Hladik.
\newblock {\em Spinors in Physics}.
\newblock Springer, New York, 1999.

\bibitem{holschneider}
M.~Holschneider.
\newblock Continuous wavelet transforms on the sphere.
\newblock {\em J. Math. Phys.}, 37(8):4156--4165, 1996.

\bibitem{VanSomCon}
P.~Van Lancker, F~Sommen, and D~Constales.
\newblock Models of irreducible representations of {$Spin(m)$}.
\newblock {\em Advances in applied Clifford algebras}, 11:271--289, 2001.

\bibitem{Pa}
V.P. Palamodov.
\newblock Holomorphic synthesis of monogenic functions of several quaternionic
  variables.
\newblock {\em J. D'Analyse Mathématique}, 78 (1):177--204, 1999.

\bibitem{Pe}
D.~Pertici.
\newblock {\em Teoria delle funzioni di piu variabili quaternionici}.
\newblock PhD thesis, Univ. D. Stud. d. Firenze, 1988.

\bibitem{SO1}
F.~Sommen.
\newblock Functions on the spingroup.
\newblock {\em Adv. Appl. Clifford Algebras}, 6(1):37--48, 1996.

\bibitem{BHPS}
K.~G. van~den Boogaart, R.~Hilscher, J.~Prestin, and H.~Schaeben.
\newblock Application of the radial basis function method to texture analysis.
\newblock 2004.

\bibitem{So_La_Co}
P.~Van~Lancker, F.~Sommen, and D.~Constales.
\newblock Models for irreducible representations of {${\rm Spin}(m)$}.
\newblock {\em Adv. Appl. Clifford Algebras}, 11(S1):271--289, 2001.

\bibitem{Vilenkin_I}
N.~Ja. Vilenkin and A.~U. Klimyk.
\newblock {\em Representation of {L}ie groups and special functions. {V}ol. 1},
  volume~72 of {\em Mathematics and its Applications (Soviet Series)}.
\newblock Kluwer Academic Publishers Group, Dordrecht, 1991.
\newblock Simplest Lie groups, special functions and integral transforms,
  Translated from the Russian by V. A. Groza and A. A. Groza.

\bibitem{Vilenkin_III}
N.~Ja. Vilenkin and A.~U. Klimyk.
\newblock {\em Representation of {L}ie groups and special functions. {V}ol. 3},
  volume~75 of {\em Mathematics and its Applications (Soviet Series)}.
\newblock Kluwer Academic Publishers Group, Dordrecht, 1992.
\newblock Classical and quantum groups and special functions, Translated from
  the Russian by V. A. Groza and A. A. Groza.

\bibitem{Vilenkin}
N.~Ja. Vilenkin and A.~U. Klimyk.
\newblock {\em Representation of {L}ie groups and special functions. {V}ol. 2},
  volume~74 of {\em Mathematics and its Applications (Soviet Series)}.
\newblock Kluwer Academic Publishers Group, Dordrecht, 1993.
\newblock Class I representations, special functions, and integral transforms,
  Translated from the Russian by V. A. Groza and A. A. Groza.

\end{thebibliography}
\bibliographystyle{plain}
\end{document}